\numberwithin{equation}{section}
\newcommand{\JJ}{\mathcal{J}}
\newcommand{\WW}{\mathcal{W}}
\DeclareMathOperator{\Graff}{Graff}
\title{Nonlocal Phase Transitions with Singular Heterogeneous Kernels}
\author{Wes Caldwell}
\date{\today}
\begin{document}

\maketitle

\begin{abstract}
    In this paper the study of a non-local Cahn-Hilliard-type singularly perturbed family of functionals is undertaken, generalizing known results by Alberti \& Bellettini \cite{alberti_non-local_1998}.
    The kernels considered include those leading to Gagliardo seminorms for fractional Sobolev spaces.
    The limit energy is computed via $\Gamma$-convergence and shown to be an anisotropic surface energy on the interface between the two phases.
\end{abstract}

\section{Introduction}

In \cite{alberti_non-local_1998}, Alberti \& Bellettini identify the $\Gamma$-limit of the functionals
\begin{equation}\label{eq:F}
    F_\varepsilon(u)
        \defeq \frac{1}{4\varepsilon} \int_\Omega \int_\Omega J_\varepsilon(y-x) \abs{u_\varepsilon(y) - u_\varepsilon(x)}^2 \,dy\,dx
        + \frac{1}{\varepsilon} \int_\Omega W(u) \,dx,
\end{equation}
where $\Omega \subset \RR^N$ is a bounded domain, $J_\varepsilon(h) \defeq \varepsilon^{-N}J(h/\varepsilon)$ for $J \colon \RR^N \to [0, +\infty)$ an even interaction potential satisfying
\begin{equation}\label{eq:abH1}
    \int_{\RR^N} J(h) \abs{h} \,dh < +\infty,
\end{equation}
and $W$ a continuous double-well potential which vanishes at $\pm 1$ only.

Such functionals arise, for example, as continuum limits of Ising spin systems on lattices; in this case, $u$ represents a macroscopic magnetization density (see \cite{alberti_surface_1996}, \cite{bricmont_surface_1980} and the references therein).
This model closely resembles the classical Cahn-Hilliard model for phase separation (see, e.g., \cite{baldo_minimal_1990}, \cite{bouchitte_singular_1990}, \cite{cahn_free_1958}, \cite{modica_gradient_1987}, \cite{sternberg_effect_1988}), given by the functional
\[
    E_\varepsilon(u) \defeq \varepsilon \int_\Omega \abs{\nabla u}^2 \,dx + \frac{1}{\varepsilon} \int_\Omega W(u) \,dx,
\]
which was originally treated in the context of $\Gamma$-convergence in the seminal paper of Modica \& Mortola \cite{modica_esempio_1977}.
As noted in \cite{alberti_non-local_1998}, the functionals $F_\varepsilon$ in (\ref{eq:F}) can be obtained from $E_\varepsilon$ by replacing the gradient term $\abs{\nabla u}^2$ with finite differences averaged with respect to $J$; that is,
\[
    \abs{\nabla u(x)}^2 \approx \int_{\RR^N} \frac{\abs{u(x+\varepsilon h) - u(x)}^2}{\varepsilon^2} J(h) \,dh.
\]

Functionals of the form $\mathcal{E}(u) \defeq \int_\Omega \int_\Omega J(y-x) \abs{u(y) - u(x)}^2 \,dy \,dx$ also arise in relation to symmetric L{\'e}vy processes (see, e.g., \cite{barlow_non-local_2006}, \cite{bertoin_levy_1998}).
To every measure $\nu$ on $\RR^N \setminus \set{0}$ satisfying $\int_{\RR^N}\left(\abs{h}^2 \wedge 1\right) \,d\nu(h) < +\infty$ we can associate a symmetric L{\'e}vy process with jumps distributed according to $\nu$; conversely, the L{\'e}vy-Khintchine formula (see \cite[Section 7.6]{chung_probability_1974} for more details) guarantees that every L{\'e}vy process has associated to it such a $\nu$.
In this context, the energy $\mathcal{E}$ arises as the quadratic form associated to the pseudo-differential operator $Lu(x) \defeq \int_{\RR^N} \left[u(x + h) - u(x)\right] J(h) \,dh$ (see \cite[Section 2]{foghem_gradient_2023}).

In \cite[Theorem 4.6]{alberti_nonlocal_1998}, Alberti \& Bellettini show that $F_\varepsilon$ is not identically equal to $+\infty$ if and only if
\begin{equation}\label{eq:abH2}
    \int_{\RR^N} J(h) \left(\abs{h} \wedge \abs{h}^2\right) < +\infty,
\end{equation}
leading them to claim in \cite{alberti_non-local_1998} that the hypothesis (\ref{eq:abH1}) can be relaxed to (\ref{eq:abH2}).
Crucially, the relaxed hypothesis (\ref{eq:abH2}) would allow for singular kernels $J(h) = \abs{h}^{-N-2s}$ for $\frac{1}{2} < s < 1$.
This choice of $J$ in (\ref{eq:F}) gives the (rescaled) ``fractional Allen-Cahn energy'' (see \cite{cabre_stable_2021}, \cite{savin_density_2014}),
\[
    E_\varepsilon^s(u) = \varepsilon^{2s-1} \abs{u}_{H^s(\Omega)}^2 + \frac{1}{\varepsilon} \int_\Omega W(u) \,dx,
\]
where $\abs{\cdot}_{H^s(\Omega)}$ is the fractional Sobolev seminorm in $\Omega$ (see \cite{leoni_first_2023} for background on fractional Sobolev spaces).
A variant of this functional including contributions outside of $\Omega$ was considered by Savin \& Valdinoci in \cite{savin_-convergence_2012}, where the $\Gamma$-limit was shown to be the same as the $\Gamma$-limit of the classical Cahn-Hilliard functional $E_\varepsilon$.

In this paper, we will consider the functional (\ref{eq:F}) under the weaker hypothesis (\ref{eq:abH2}), and prove that we still recover the same $\Gamma$-limit as in \cite{alberti_non-local_1998}.
Unless stated otherwise, we will always assume the following hypotheses:
\begin{enumerate}[label=\textit{(\roman*)}]
    \item $J \colon \RR^N \to [0, +\infty)$ is an even, measurable function satisfying 
        \begin{align}
            \int_{\RR^N} J(h) \left(\abs{h} \wedge \abs{h}^2\right) \,dh \eqdef M_J < +\infty, \label{H1}\tag{H1} \\
            \int_{B_1^c} J(h) \abs{h} \log\abs{h} \,dh < + \infty, \label{H2}\tag{H2}
        \end{align}
        where $B_r$ denotes the ball of radius $r$, and the superscript $\,^c$ denotes the complement in $\RR^N$.
    \item
        $W \colon \RR \to [0, +\infty)$ is a continuous function which vanishes at $\pm 1$ only, and has at least linear growth at infinity.
        That is, there exist $C, R > 0$ such that
        \[
            W(z) \geq C\abs{z} \text{ for all } \abs{z} \geq R.
        \]
\end{enumerate}
        
\subsection{Main Theorem}

We construct the anisotropic limit functional in a standard way using cell formulae (cf. \cite{alberti_non-local_1998}, \cite{dal_maso_asymptotic_2017}).
Let $\mathcal{F}$ be the unscaled functional defined by
\[
    \mathcal{F}(u, A)
        \defeq \frac{1}{4} \int_A \int_{\RR^N} J(h) \abs{u(x + h) - u(x)}^2 \,dh \,dx
        + \int_A W(u(x)) \,dx,
\]
for all open $A \subset \RR^N$ and $u \colon \RR^N \to \RR$ measurable.
For a fixed unit vector $\xi \in S^{N-1}$, let $\mathcal{C}_\xi$ be the collection of cubes of dimension $N-1$ centered at the origin in the hyperplane $\xi^\perp$. 
Furthermore, for each cube $C \in \mathcal{C}_\xi$ define via Minkowski sum the strip $T_C \defeq C + \RR\xi = \set{x + t\xi \given x \in C, t \in \RR}$, and the class of functions
\[
    X(C) \defeq \set{u \colon \RR^N \to [-1, 1] \given u \text{ is $C$-periodic}, \lim_{\inner{x}{\xi} \to \pm \infty} u(x) = \pm 1}.
\]
We define the anisotropic surface tension $\psi$ as
\begin{equation}\label{eq:psi}
    \psi(\xi) \defeq \inf \set{\abs{C}^{-1} \mathcal{F}(u, T_C) \given C \in \mathcal{C}_\xi, u \in X(C)},
\end{equation}
where $\abs{C}$ is the Lebesgue measure of $C$.
The function $\psi$ is upper semicontinuous on the sphere (cf. \cite[Lemma 5.3]{alberti_non-local_1998}).

The limit functional $F$ is thus defined as
\begin{equation}
    F(u) \defeq \begin{cases}
        \int_{S_u} \psi(\nu_u) \,d\Hausdorff^{N-1} &\textup{ if } u \in BV(\Omega; \set{-1, 1}), \\
        +\infty &\textup{ if } u \in L^1(\Omega) \setminus BV(\Omega; \set{-1, 1}).
    \end{cases}
\end{equation}
For definitions of $S_u, \nu_u, \Hausdorff^{N-1}$, and $BV(\Omega; \set{-1, 1})$, see Section 1.3.
We now state our main theorem.

\begin{theorem}\label{thm:main}
    Let $\varepsilon_j \to 0$ as $j \to \infty$.
    Under the hypotheses (i) and (ii) on $J$ and $W$, the following statements hold:
    \begin{enumerate}[ref=\thetheorem(\arabic*)]
        \item\label{item:compactness} \textup{Compactness:}
            Let $\set{u_j} \subset L^1(\Omega)$ be such that
            \[
                \sup_{j \in \NN} F_{\varepsilon_j}(u_j) < +\infty.
            \]
            Then there exists a subsequence $\{\varepsilon_{j_n}\}_{n \in \NN}$ such that $u_{j_n} \to u$ in $L^1(\Omega)$ for some $u \in BV(\Omega; \set{-1, 1})$.

        \item\label{item:liminf} \textup{$\Gamma$-lim inf}:
            For all $u_j \to u$ in $L^1(\Omega)$, then we have
            \[
                \liminf_{j \to \infty} F_{\varepsilon_j}(u_j) \geq F(u).
            \]

        \item\label{item:limsup} \textup{$\Gamma$-lim sup}:
            For all $u \in BV(\Omega; \set{-1, 1})$, there exists a sequence $\{u_j\} \subset L^1(\Omega)$ such that $u_j \to u$ in $L^1(\Omega)$ and
            \[
                \limsup_{j \to \infty} F_{\varepsilon_j}(u_j) \leq F(u).
            \]
    \end{enumerate}
\end{theorem}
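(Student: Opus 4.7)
The proof follows the three-part scheme of Alberti \& Bellettini~\cite{alberti_non-local_1998}: a slicing-based compactness step, a measure-theoretic localization and blow-up for the $\Gamma$-liminf, and a polyhedral approximation for the $\Gamma$-limsup. Each step requires adaptation because (H1) permits $J$ to be singular near the origin (e.g.\ $J(h) = \abs{h}^{-N-2s}$), which the stronger original hypothesis \eqref{eq:abH1} rules out. The new role of (H2) is to absorb a logarithmic loss that arises in several of the resulting tail estimates.

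\textbf{Compactness.} Decompose $J = J\chi_{B_1} + J\chi_{B_1^c}$. The far-field piece satisfies \eqref{eq:abH1} and is a bounded-range interaction handled as in~\cite{alberti_non-local_1998}. The near-origin piece satisfies $\int_{B_1} J(h)\abs{h}^2\,dh < +\infty$ by (H1); a Cauchy-Schwarz application converts the $L^2$-type interaction energy into an $L^1$ modulus-of-continuity bound for $u_j$ on small shifts. The potential term together with the linear growth of $W$ from (ii) yields a uniform $L^1(\Omega)$ bound, and a standard 1D slicing argument along lines in a fixed direction $\xi$ followed by the Fr\'echet-Kolmogorov criterion produces a convergent subsequence with limit in $BV(\Omega; \{-1, 1\})$.

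\textbf{$\Gamma$-liminf.} Assume after extraction that $u_j \to u$ in $L^1(\Omega)$ with $\liminf_j F_{\varepsilon_j}(u_j) < +\infty$; by the compactness just proved, $u \in BV(\Omega; \{-1, 1\})$. Define Radon measures $\mu_j$ on $\Omega$ by restricting the integrations in $F_{\varepsilon_j}(u_j)$ to open subsets, and pass to a weak-$*$ limit $\mu$. It suffices to show that the $(N-1)$-dimensional density of $\mu$ along $S_u$ is at least $\psi(\nu_u)$ at $\Hausdorff^{N-1}$-a.e.\ point. This is done by blow-up at a generic $x_0 \in S_u$ with $\xi \defeq \nu_u(x_0)$: the rescaled profile $y \mapsto u_j(x_0 + \varepsilon_j y)$, after truncation to a large cube in $\xi^\perp$ and periodic extension, becomes an admissible competitor in the cell formula~\eqref{eq:psi}, with truncation error bounded using (H2).

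\textbf{$\Gamma$-limsup and principal obstacle.} By density of polyhedral interfaces in $BV(\Omega; \{-1, 1\})$, it suffices to produce recovery sequences for $u = \chi_E - \chi_{E^c}$ with $E$ polyhedral. For each flat face $\Sigma_i \subset \partial E$ with normal $\xi_i$, pick a near-optimizer $(C_i, v_i)$ for $\psi(\xi_i)$ in~\eqref{eq:psi} and set $u_\varepsilon(x) \defeq v_i\left(\frac{x - \pi_i(x)}{\varepsilon}\right)$ in a tubular neighborhood of $\Sigma_i$, where $\pi_i$ is orthogonal projection onto $\Sigma_i$; glue the pieces via cutoffs near the $(N-2)$-skeleton. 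A change of variables $y = x/\varepsilon$ shows that the energy concentrated on each tube converges to $\psi(\xi_i)\Hausdorff^{N-1}(\Sigma_i)$. The main obstacle is the long-range interactions between distinct tubes: naively these produce a contribution of order $\int_{|h| \lesssim 1/\varepsilon} J(h)\abs{h}\,dh$, which for borderline-integrable $J$ diverges logarithmically in $1/\varepsilon$. Hypothesis (H2) is calibrated precisely to absorb this logarithm, forcing the cross-terms to be $o(1)$; the analogous logarithmic error in the liminf blow-up is controlled in the same way. This is the step at which the argument crucially departs from~\cite{alberti_non-local_1998}, where the stronger hypothesis \eqref{eq:abH1} obviates the need for the logarithmic moment.
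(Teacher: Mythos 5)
Your outline for the compactness and $\Gamma$-liminf parts is workable but heavier than necessary, and in one place misattributes the role of (H2). Since $F_\varepsilon$ only decreases when $J$ is replaced by the truncation $J^\rho = \mathbf{1}_{B_\rho^c}J$, and $J^\rho$ satisfies the Alberti--Bellettini hypothesis \eqref{eq:abH1}, compactness is an immediate corollary of their theorem; no Cauchy--Schwarz treatment of the near-origin piece is needed. For the liminf, the paper likewise avoids a blow-up entirely: it applies the Alberti--Bellettini liminf for each fixed $\rho$ and then passes $\rho \to 0$, the only nontrivial point being that $\sup_\rho \psi^\rho = \psi$ (proved via Helly's selection theorem on the one-dimensional optimal profiles plus Fatou). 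In particular the liminf uses only (H1); your claim that a ``logarithmic error in the liminf blow-up'' must be absorbed by (H2) is not correct, and if your blow-up argument genuinely needed (H2) there it would be proving a weaker statement than the paper's.

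The genuine gap is in the $\Gamma$-limsup, which is the only part requiring new ideas. You have misidentified the obstacle. Interactions between distinct tubes at a \emph{fixed} mutual distance $d>0$ are harmless under (H1) alone: they are bounded by $\tfrac{C}{d}\,\omega_1(\varepsilon/d)\to 0$ (Lemma~\ref{lemma:separation}); no logarithmic divergence arises there. The real difficulties are (a) the \emph{near-origin} singularity of $J$: gluing two order-one-different profiles with a cutoff $\varphi$ supported on an $O(\varepsilon)$-wide shell produces a term $\tfrac{1}{\varepsilon}\int\int J_\varepsilon(y-x)\abs{\varphi(y)-\varphi(x)}^2\abs{u-w}^2$, which is \emph{not} small unless one first selects, by a pigeonhole argument over $\varepsilon$-shells, a shell on which $\norm{u-w}_{L^1}$ is tiny --- this is the content of the Modification Theorem, and it is exactly where the summability $\sum_n \omega_1(1/n)/n<\infty$, i.e.\ (H2*), enters; and (b) the interaction across the $2\varepsilon$-wide gap where $\tilde u_\varepsilon=+1$ and $\tilde u_\varepsilon=-1$ meet near the $(N-2)$-skeleton, which after slicing over all lines (Blaschke--Petkantschin) is controlled by $\int_{B_2^c}J(h)\abs{h}\ln\abs{h}\,dh$ --- the second and decisive use of (H2). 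Your proposal's phrase ``glue the pieces via cutoffs near the $(N-2)$-skeleton'' passes over precisely the step that fails na\"ively for singular kernels and that constitutes the paper's main technical contribution; as written, the construction would not yield \eqref{eq:modification} and the limsup would not close.
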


Parts 2 and 3 of Theorem \ref{thm:main} can be summarized by saying that the sequence $\set{F_\varepsilon}$ $\Gamma$-converges to $F$ in $L^1(\Omega)$.
For more information on $\Gamma$-convergence, see \cite{dal_maso_introduction_1993}.

As indicated in \cite{alberti_non-local_1998}, a truncation and diagonalization argument can be used with their results to prove the compactness and $\Gamma-\liminf$ results under our hypotheses (see Section 2 for more details).
However, their arguments cannot directly be used to prove the $\Gamma-\limsup$ under our hypotheses.
When $J$ is allowed to have a very strong singularity at the origin, boundedness of $F_\varepsilon(u)$ enforces regularity of $u$, so a more sophisticated gluing procedure is needed to construct smooth recovery sequences.

\subsection{Discussion of Hypotheses}

Hypothesis (\ref{H2}) is novel, and is needed in the proof and use of the modification theorem, Theorem \ref{thm:mod}.
As indicated in \cite{alberti_non-local_1998}, their methods can identify the $\Gamma-\liminf$ of $F_\varepsilon$ under hypothesis (\ref{H1}) since one can approximate $J$ from below by a sequence of kernels truncated away from the origin. However, this direct approximation does not work for the $\Gamma-\limsup$ since one would need to approximate the singularity from above, which is not possible.

Note further that the far-field bounds in (\ref{H1}) follow directly from the second hypothesis (\ref{H2}).
We choose to state the hypotheses in this overlapping manner since the quantity $M_J$ defined in (\ref{H1}) is useful and because it is unclear at present whether hypothesis (\ref{H2}) is strictly necessary for Theorem \ref{thm:main} or is merely a technical necessity limited by the methods used.

To better quantify the rate of decay of $J(h)\abs{h}$ at infinity, define the function $\omega_1 \colon [0, +\infty) \to [0, +\infty)$ by
\begin{equation}\label{eq:omega1}
    \omega_1(t)
        \defeq \int_{B_{1/t}^c} J(h)\abs{h} \,dh.
\end{equation}
If $J$ satisfies hypothesis (\ref{H1}), then certainly $\omega_1(t) \to 0$ as $t \to 0^+$.
The following proposition shows that hypothesis (\ref{H2}) can also be stated in terms of $\omega_1$ alone.

\begin{proposition}\label{prop:hyp}
    Suppose that $J$ satisfies hypothesis (\ref{H1}).
    Then (\ref{H2}) holds if and only if
    \begin{equation}
        \sum_{n = 1}^\infty \omega_1\left(\frac{1}{n}\right)\frac{1}{n} < +\infty. \label{H2*} \tag{H2*}
    \end{equation}
\end{proposition}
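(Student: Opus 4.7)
The plan is to apply Tonelli's theorem to express the series as an integral of $J(h)\abs{h}$ against a harmonic-number weight, and then compare that weight to $\log\abs{h}$ via elementary bounds.

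Concretely, writing $\omega_1(1/n) = \int_{\{\abs{h}>n\}} J(h)\abs{h}\,dh$ and swapping the sum with the integral (permitted by nonnegativity), I obtain
\[
    \sum_{n=1}^\infty \frac{\omega_1(1/n)}{n}
        = \int_{\RR^N} J(h)\abs{h} \sum_{\substack{n \geq 1 \\ n < \abs{h}}} \frac{1}{n} \,dh
        = \int_{B_1^c} J(h)\abs{h}\, H_{\lfloor \abs{h} \rfloor} \,dh,
\]
where $H_m \defeq \sum_{k=1}^m 1/k$, and I have used that almost every $h$ has non-integer norm in order to identify the index set with $\set{1,\dots,\lfloor \abs{h}\rfloor}$.

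Next, I would invoke the classical bounds $\log m \leq H_m \leq 1 + \log m$ for $m \geq 1$ together with the elementary observation that $\abs{h}/2 \leq \lfloor \abs{h} \rfloor \leq \abs{h}$ whenever $\abs{h} \geq 2$. These show that $H_{\lfloor \abs{h} \rfloor}$ differs from $\log\abs{h}$ by a bounded additive constant on $\set{\abs{h} \geq 2}$, while both quantities are bounded by an absolute constant on the shell $\set{1 < \abs{h} < 2}$. Absorbing the bounded contributions using $\int_{B_1^c} J(h)\abs{h}\,dh \leq M_J$, which is finite by (\ref{H1}), I conclude that there exist constants $c, C > 0$ such that
\[
    c\int_{B_1^c} J(h)\abs{h}\log\abs{h}\,dh - CM_J
        \leq \sum_{n=1}^\infty \frac{\omega_1(1/n)}{n}
        \leq C\int_{B_1^c} J(h)\abs{h}\log\abs{h}\,dh + CM_J,
\]
from which the equivalence of (\ref{H2}) and (\ref{H2*}) is immediate.

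The argument is essentially a single Tonelli swap followed by a routine asymptotic comparison, so no step presents a real obstacle; the only mild care needed is to justify the identification of $\sum_{n < \abs{h}} 1/n$ with $H_{\lfloor \abs{h}\rfloor}$ on a full-measure set of $h$'s, and to keep track of the bounded-annulus contributions when passing between $H_{\lfloor \abs{h}\rfloor}$ and $\log\abs{h}$.
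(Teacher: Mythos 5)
Your argument is correct and rests on the same core step as the paper's proof, namely a Fubini/Tonelli interchange identifying the series with an integral of $J(h)\abs{h}$ against a logarithmic weight over $B_1^c$. The only cosmetic difference is that the paper first replaces the series by $\int_1^\infty \omega_1(1/t)\,t^{-1}\,dt$ using monotonicity of $t \mapsto \omega_1(1/t)/t$ and then swaps, obtaining $\log\abs{h}$ exactly, whereas you swap at the discrete level and compare $H_{\lfloor\abs{h}\rfloor}$ with $\log\abs{h}$ up to bounded errors absorbed by $M_J$.
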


\begin{proof}
    Note that $t \mapsto \omega_1\left(\frac{1}{t}\right)\frac{1}{t}$ is a decreasing function of $t$ for $t > 1$, so (\ref{H2*}) is satisfied if and only if $\omega_1(1) < +\infty$ and
    \[
        \int_1^\infty \omega_1\left(\frac{1}{t}\right)\frac{1}{t} \,dt < +\infty.
    \]
    Since $J$ satisfies (\ref{H1}), we have that $\omega_1(1) \leq M_J < +\infty$.
    Now, letting $\chi_U$ denote the indicator function of a set $U \subset \RR^N$ and applying Fubini's theorem, we get
    \begin{align*}
        \int_1^\infty \omega_1\left(\frac{1}{t}\right)\frac{1}{t} \,dt
            &= \int_1^\infty \frac{1}{t} \int_{B_t^c} J(h) \abs{h} \,dh \,dt \\
            &= \int_1^\infty \frac{1}{t} \int_{\RR^N} \chi_{\set{\abs{h} \geq t}} J(h) \abs{h} \,dh \,dt \\
            &= \int_{\RR^N} \int_1^\infty \frac{1}{t} \chi_{\set{\abs{h} \geq t}} \,dt J(h) \abs{h} \,dh \\
            &= \int_{B_1^c} \int_1^{\abs{h}} \frac{1}{t} \,dt J(h) \abs{h} \,dh \\
            &= \int_{B_1^c} J(h) \abs{h} \log\abs{h} \,dh,
    \end{align*}
    completing the proof.
\end{proof}

This equivalent hypothesis is clearly satisfied by any kernel with asymptotic decay $\omega_1(t) = \BigO(t^\alpha)$ for some $\alpha > 0$.
As an example, the fractional Sobolev kernel $J(h) = \abs{h}^{-N-2s}$ has associated decay $\omega_1(t) \sim t^{2s-1}$, so satisfies (\ref{H2*}) if and only if $s > 1/2$.
These same asymptotics also hold for anisotropic Sobolev kernels $J_K(h) \defeq \norm{h}_K^{-N-2s}$, where $\norm{\cdot}_K$ is an arbitrary norm on $\RR^N$ with unit ball $K$ (cf. \cite{ludwig_anisotropic_2014}), so $J_K$ also satisfies hypotheses (\ref{H1}) and (\ref{H2}).

\subsection{Sets of Finite Perimeter and Functions of Bounded Variation}

In this section we recall the definition and some basic theory of sets of finite perimeter and functions of bounded variation (see, e.g., \cite[Section 5]{evans_measure_1992}, \cite{maggi_sets_2012}).

\begin{definition}
    A set $E \subset \RR^N$ such that $\abs{E} < +\infty$ is a set of finite perimeter if
    \[
        P(E) \defeq \sup \set*{\int_E \Div \varphi \,dx \given \varphi \in C_c^1(\RR^N; \RR^N), \norm{\varphi}_\infty \leq 1} < +\infty.
    \]
\end{definition}

Finiteness of $P(E)$ implies that the distributional derivative of the indicator function $1_E$ is a vector-valued Radon measure such that $\abs{D1_E}(\RR^N) = P(E)$, and for all $\varphi \in C_c^1(\RR^N)$,
\begin{equation}\label{eq:dderiv}
    \int_{\RR^N} \varphi \cdot \,dD1_E = \int_E \Div \varphi \,dx.
\end{equation}

We let $BV(\Omega)$ denote the space of all functions $u \colon \Omega \to \RR$ of bounded variation; that is, functions $u \in L^1(\Omega)$ whose distributional derivatives are vector-valued Radon measures.
Furthermore, we let $BV(\Omega; \set{-1, 1})$ denote the subspace of functions of bounded variation taking values $\pm 1$ only.
We may identify the space $BV(\Omega; \set{-1,1})$ with sets of finite perimeter, since every function $u \in BV(\Omega; \set{-1, 1})$ is uniquely determined by the set $\set{u = +1}$.

\begin{definition}
    Let $E \subset \RR^N$ be a set of finite perimeter.
    We define the \emph{reduced boundary} $\partial^*E$ of $E$ to be the set of points $x \in \RR^N$ for which the limit
    \[
        \nu_E(x) = -\lim_{r \to 0^+} \frac{D1_E(x + B_r)}{\abs{D1_E}(x + B_r)},
    \]
    exists and satisfies $\abs{\nu_u(x)} = 1$, where $B_r$ is the ball centered at the origin of radius $r$.
    We call the vector $\nu_u(x)$ the \emph{exterior measure-theoretic normal to $E$} at $x$.
\end{definition}

De Giorgi's Structure Theorem (see, e.g., \cite[Theorem 15.9]{maggi_sets_2012}) relates the measure $D1_E$ to the Hausdorff measure $\Hausdorff^{N-1}$; specifically, we have that $\partial^*E$ is an $\Hausdorff^{N-1}$-rectifiable set and $D1_E = -\nu_E \Hausdorff^{N-1} \mres \partial^*E$.
In other words, we may rewrite (\ref{eq:dderiv}) as the \textit{generalized Gauss-Green Theorem}
\[
    \int_{\partial^*E} \varphi \cdot \nu_E \,d\Hausdorff^{N-1} = \int_E \Div \varphi \,dx.
\]

Since every $u \in BV(\Omega; \set{-1, 1})$ can be identified with $\set{u = +1}$, we define the ``jump set'' $S_u \defeq \partial^*{\set{u = +1}}$ and the exterior normal $\nu_u \defeq \nu_{\set{u = +1}}$.

\subsection{Slicing and Integral Geometry}

Let $S^{N-1} \subset \RR^N$ be the unit sphere, and let $\Graff(N,1)$ be the affine Grassmannian of lines in $\RR^N$ (for background on $\Graff(N,k)$, see \cite[Chapter 6]{klain_introduction_1997}).
$\Graff(N,1)$ supports a rigid motion invariant Haar measure, which (suitably normalized) we will denote by $\lambda_1^N$.
This measure can be described explicitly: given an affine line $L \in \Graff(N,1)$, we may parameterize $L = x_0 + \RR\xi$, where $\xi \in S^{N-1}$ is either of the unit vectors spanning $L$, and $x_0 \in \xi^\perp$ is a basepoint lying on the hyperplane orthogonal to $\xi$.
Given a measurable function $f \colon \Graff(N,1) \to \RR$, we therefore define
\begin{equation}\label{eq:lambdadef}
    \int_{\Graff(N,1)} f(L) \,d\lambda_1^N(L)
        \defeq \frac{1}{2} \int_{S^{N-1}} \int_{\xi^\perp} f(x_0 + \RR\xi) \,d\Hausdorff^{N-1}(x_0) \,d\Hausdorff^{N-1}(\xi).
\end{equation}
The following lemma lets us relate the integral over a function of two spatial variables to an integral over its one dimensional slices.

\begin{lemma}[{Blaschke-Petkantschin Formula, c.f. \cite[Theorem 6.46]{leoni_first_2023}, \cite[Theorem 7.2.7]{schneider_stochastic_2008}}]\label{lem:abp}
    Let $f \colon A \times B \to \RR$ be a measurable function, $A,B \subset \RR^N$ measurable subsets.
    Then,
    \begin{equation}\label{eq:abp}
        \begin{split}
            \int_A \int_B &f(x,y) \,dy \,dx \\
                &= \int_{\Graff(N,1)} \int_{A \cap L} \int_{B \cap L} f(x,y)\abs{y-x}^{N-1} \,d\Hausdorff^1(y) \,d\Hausdorff^1(x) \,d\lambda_1^N(L).
        \end{split}
    \end{equation}
\end{lemma}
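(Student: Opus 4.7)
The strategy is to reduce the right-hand side of (\ref{eq:abp}) to the left-hand side via two changes of variables combined with the usual polar decomposition of $\RR^N$. First I would unpack the right-hand side using the definition (\ref{eq:lambdadef}) of $\lambda_1^N$, so that the integral over $\Graff(N,1)$ becomes $\frac{1}{2} \int_{S^{N-1}} \int_{\xi^\perp} (\cdots) \,d\Hausdorff^{N-1}(x_0) \,d\Hausdorff^{N-1}(\xi)$. For each fixed $\xi \in S^{N-1}$ and $x_0 \in \xi^\perp$, the line $L = x_0 + \RR\xi$ is isometrically parametrized by $s \mapsto x_0 + s\xi$, so the innermost slice integrals $\int_{A \cap L}\int_{B \cap L}$ become ordinary Lebesgue integrals $\int_\RR \int_\RR$ in variables $s, t$, with indicators $\chi_A(x_0 + s\xi) \chi_B(x_0 + t\xi)$ and weight $\abs{t - s}^{N-1}$.

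Next I would apply Fubini to regroup. For fixed $\xi$, the map $(x_0, s) \mapsto x_0 + s\xi$ is a measure-preserving bijection $\xi^\perp \times \RR \to \RR^N$, so the $(x_0, s)$ integration collapses to $\int_{\RR^N} \,dx$ with $x = x_0 + s\xi$; substituting $r = t - s$ in the innermost integral puts the right-hand side into the form
\[
    \frac{1}{2} \int_{\RR^N} \chi_A(x) \int_{S^{N-1}} \int_\RR f(x, x+r\xi) \chi_B(x+r\xi) \abs{r}^{N-1} \,dr \,d\Hausdorff^{N-1}(\xi) \,dx.
\]
The inner double integral is precisely a polar decomposition: for any measurable $g \geq 0$, one has the identity
\[
    \int_{\RR^N} g(v) \,dv = \frac{1}{2} \int_{S^{N-1}} \int_{-\infty}^{+\infty} g(r\xi) \abs{r}^{N-1} \,dr \,d\Hausdorff^{N-1}(\xi),
\]
where the factor $\frac{1}{2}$ reflects the two-to-one cover of $\RR^N \setminus \set{0}$ by $S^{N-1} \times (\RR \setminus \set{0})$ via $(\xi, r) \mapsto r\xi$. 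Applying this with $g(v) = f(x, x+v) \chi_B(x+v)$ and then translating $v = y - x$ returns the inner integral to $\int_B f(x, y) \,dy$, while the remaining $\int_{\RR^N} \chi_A(x) \,dx$ becomes $\int_A \,dx$, reproducing the left-hand side of (\ref{eq:abp}).

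Conceptually the identity is a change of variables from $(x,y) \in \RR^N \times \RR^N$ to the parameters $(\xi, x_0, s, t)$ encoding the secant line through $x$ and $y$: the Jacobian $\abs{y-x}^{N-1}$ is exactly the $\abs{r}^{N-1}$ weight from polar coordinates, and the prefactor $\frac{1}{2}$ appears because each undirected line is represented by two antipodal unit vectors. The main point to be careful about is this bookkeeping of factors of two and the compatibility of the chosen normalizations of $\Hausdorff^{N-1}$ on $S^{N-1}$, on $\xi^\perp$, and of $\lambda_1^N$; once that is settled, all manipulations are justified for nonnegative $f$ by Tonelli and extend to general measurable $f$ by the usual decomposition into positive and negative parts.
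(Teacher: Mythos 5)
Your argument is correct, and it is worth noting that the paper itself gives no proof of this lemma at all --- it is stated with citations to the literature and used as a black box. Your derivation is the standard one for the $k=1$ Blaschke--Petkantschin formula and is self-contained: unpacking the definition (\ref{eq:lambdadef}), parametrizing $L = x_0 + \RR\xi$ isometrically, collapsing $(x_0,s) \mapsto x_0 + s\xi$ to Lebesgue measure on $\RR^N$ for fixed $\xi$, and recognizing the remaining $\tfrac{1}{2}\int_{S^{N-1}}\int_{\RR} (\cdot)\,\abs{r}^{N-1}\,dr\,d\Hausdorff^{N-1}(\xi)$ as the symmetrized polar-coordinates identity are all legitimate steps, and the bookkeeping of the factor $\tfrac12$ (double cover of unoriented lines by $\pm\xi$, matching the double cover of $\RR^N\setminus\set{0}$ by $(\xi,r)\mapsto r\xi$) comes out right. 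The only caveat, which applies equally to the lemma as stated, is that for sign-changing $f$ the identity should be read as holding whenever one side is absolutely convergent; your appeal to Tonelli for $f \geq 0$ followed by decomposition into positive and negative parts is the right way to phrase this.
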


Given a set $E \subset \RR^N$ and $\delta > 0$, define the ``inner set''
\[
    E_\delta \defeq \set{x \in E \given d(x, \partial E) > \delta},
\]
and the ``outer set''
\[
    E^\delta \defeq \set{x \in \RR^N \given d(x, E) < \delta}.
\]
In particular, we may write the outer set as the Minkowski sum $E^\delta = E + B_\delta$, where $B_\delta$ is the ball centered at the origin of radius $\delta$.
The following lemma lets us control the volume of the outer set in terms of its $\Hausdorff^{N-1}$ measure.

\begin{lemma}\label{lem:steiner}
    Suppose $X \subset \RR^N$ is a finite union of compact, convex sets of dimension $N-1$ with pairwise $\Hausdorff^{N-1}$-null intersections; that is, $X = \bigcup_{k=1}^K X_k$ for some $K \in \NN$, where each $X_k$ lies in a hyperplane, and $\Hausdorff^{N-1}(X_i \cap X_j) = 0$ for $i \neq j$.
    If $\delta > 0$, then
    \begin{equation}\label{eq:steiner}
        \Hausdorff^N(X^\delta)
            \leq 2\pi \delta \Hausdorff^{N-1}(X) + \BigO(\delta^2).
    \end{equation}
\end{lemma}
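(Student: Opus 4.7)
The plan is to reduce to a single convex piece by subadditivity, compute the Minkowski neighborhood of a flat compact convex set via Fubini along the normal direction, and then apply Steiner's formula in the ambient hyperplane to control the boundary contribution.

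Since $X^\delta \subset \bigcup_{k=1}^K X_k^\delta$, subadditivity of Lebesgue measure gives $\Hausdorff^N(X^\delta) \leq \sum_{k=1}^K \Hausdorff^N(X_k^\delta)$, so it suffices to estimate $\Hausdorff^N(X_k^\delta)$ for a single convex piece; the null-intersection hypothesis will then ensure $\Hausdorff^{N-1}(X) = \sum_k \Hausdorff^{N-1}(X_k)$ upon summing back. For a single piece $X_k$, let $H_k$ denote the containing hyperplane and $\nu_k$ a unit normal. Since $X_k^\delta = X_k + B_\delta$, I would decompose $x \in \RR^N$ as $x = y + t\nu_k$ with $y \in H_k$ and $t \in \RR$; the condition $x \in X_k^\delta$ then becomes $d_{H_k}(y,X_k)^2 + t^2 < \delta^2$, where $d_{H_k}$ denotes intrinsic distance in $H_k$. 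Fubini then yields
\[
    \Hausdorff^N(X_k^\delta) = \int_{(X_k)_{H_k}^\delta} 2\sqrt{\delta^2 - d_{H_k}(y,X_k)^2} \,d\Hausdorff^{N-1}(y) \leq 2\delta\,\Hausdorff^{N-1}\bigl((X_k)_{H_k}^\delta\bigr),
\]
where $(X_k)_{H_k}^\delta$ is the $\delta$-neighborhood of $X_k$ taken inside $H_k$.

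Finally, since each $X_k$ is compact and convex in the $(N-1)$-dimensional ambient $H_k$, the Steiner formula applied within $H_k$ yields $\Hausdorff^{N-1}\bigl((X_k)_{H_k}^\delta\bigr) = \Hausdorff^{N-1}(X_k) + \BigO(\delta)$, with implicit constant controlled by the $(N-2)$-dimensional perimeter of $X_k$. Summing over $k$ then produces a bound of the form $C\delta\,\Hausdorff^{N-1}(X) + \BigO(\delta^2)$ for some universal $C$; my direct computation in fact gives $C = 2$, which is majorized by the stated $2\pi$. I do not anticipate a serious obstacle—the only nontrivial ingredient beyond elementary geometry is the classical Steiner formula in a hyperplane, and all quadratic error terms remain $\BigO(\delta^2)$ since $K$ is finite and each $X_k$ is bounded.
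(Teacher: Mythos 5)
Your proof is correct. The overall skeleton is the same as the paper's (reduce to a single convex piece by subadditivity, then invoke Steiner), but you implement the single-piece estimate differently: the paper applies Steiner's formula in the ambient space $\RR^N$ directly to the flat $(N-1)$-dimensional body $X_k$, using the facts that $\mu_N(X_k)=0$ and $\mu_{N-1}(X_k)=\Hausdorff^{N-1}(X_k)$ for a set contained in a hyperplane, so the linear term $\mu_{N-1}(X_k)\,\omega_1\,\delta$ drops out immediately; you instead slice $X_k^\delta$ by Fubini along the normal direction, bound each fiber by $2\delta$, and then apply Steiner inside the hyperplane $H_k$ to control $\Hausdorff^{N-1}\bigl((X_k)_{H_k}^\delta\bigr)$. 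Your route is slightly longer but more elementary, in that it only uses the Steiner formula for a full-dimensional convex body in $\RR^{N-1}$ and avoids knowing how intrinsic volumes behave on lower-dimensional sets; the paper's route is a one-line application of the general formula. Both arguments actually yield the sharper constant $2$ in place of $2\pi$ (with the Klain--Rota normalization $\omega_1=2$), so the stated inequality holds a fortiori either way, and your use of the pairwise $\Hausdorff^{N-1}$-null intersections to resum $\sum_k\Hausdorff^{N-1}(X_k)=\Hausdorff^{N-1}(X)$ matches the paper's final step.
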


\begin{proof}
    First, note that
    \[
        X^\delta = \bigcup_{k=1}^K X_k^\delta,
    \]
    so that we may use subadditivity of the Hausdorff measure to get
    \[
        \Hausdorff^N(X^\delta)
            \leq \sum_{k = 1}^K \Hausdorff^N(X_k^\delta).
    \]
    To bound $\Hausdorff^N(X_k^\delta)$, we will appeal to Steiner's Formula \cite[Theorem 9.2.3]{klain_introduction_1997}: for all compact, convex sets $E \subset \RR^N$,
    \begin{equation}\label{eq:steinersformula}
        \mu_N(E^\delta) = \sum_{i=1}^N \mu_i(E)\omega_{N-i} \delta^{N-i},
    \end{equation}
    where $\mu_i$ are the ``intrinsic volumes'' on $\RR^N$ and $\omega_i$ is the volume of the $i$-dimensional unit ball (see \cite{klain_introduction_1997} for more information on intrinsic volumes).
    In particular, $\mu_i(E) = \Hausdorff^i(E)$ if $E$ lies within a subspace of dimension $\leq i$.

    Therefore, since each $X_k$ lies within a subspace of dimension $N-1$ (a hyperplane), we have $\mu_N(X_k) = \Hausdorff^N(X_k) = 0$ and $\mu_{N-1}(X_k) = \Hausdorff^{N-1}(X_k)$.
    Applying (\ref{eq:steinersformula}) to $\Hausdorff^N(X_k^\delta)$, we find
    \begin{align*}
        \Hausdorff^N(X^\delta)
            &\leq \sum_{k = 1}^K \Hausdorff^N(X_k^\delta) \\
            &= 2\pi\delta\sum_{k = 1}^K \Hausdorff^{N-1}(X_k) + \BigO(\delta^2) \\
            &= 2\pi\delta\Hausdorff^{N-1}(X) + \BigO(\delta^2),
    \end{align*}
    where the last equality follows since the $X_k$ have pairwise $\Hausdorff^{N-1}$-null intersections.
\end{proof}

\section{Compactness and the $\Gamma-\liminf$}

Given a kernel $J$ satisfying hypotheses (\ref{H1}) and (\ref{H2}), for $0 < \rho < 1$ define the truncated kernels
\[
    J^\rho(h) \defeq \mathbf{1}_{B_\rho^c}(h) J(h).
\]
That is, $J^\rho(h) = 0$ if $\abs{h} \leq \rho$ and $J^\rho(h) = J(h)$ otherwise.
Furthermore, denote with a superscript $\rho$ the variants of $J_\varepsilon, F_\varepsilon, F$, etc., respectively defined with $J^\rho$ in place of $J$.
We prove the compactness statement in Theorem \ref{item:compactness} by considering a single truncation.

\begin{proof}[Proof of Theorem \ref{item:compactness}]
    Let $\varepsilon_j \to 0$, and $\{u_j\} \subset L^1(\Omega)$ be such that $F_{\varepsilon_j}(u_j)$ is uniformly bounded.
    Choose any $0 < \rho < 1$.
    Since $F_\varepsilon^\rho \leq F_\varepsilon$ for all $\varepsilon > 0$, $F_{\varepsilon_j}^\rho(u_j)$ is also uniformly bounded.
    The kernel $J^\rho$ satisfies the hypotheses of \cite[Theorem 1.4]{alberti_non-local_1998}, so we may apply the theorem to conclude that there exists a subsequence of $u_j$ converging to some $u \in BV(\Omega; \set{-1, 1})$.
\end{proof}

In order to prove the $\Gamma-\liminf$ statement in Theorem \ref{item:liminf}, it will be necessary to approximate $J$ by the truncated kernels $J^\rho$ and sending $\rho \to 0^+$.
The following lemmas ensure that this approximation process properly recovers the surface tension and energies, respectively, associated to the kernel $J$.

\begin{lemma}\label{lem:psi}
    Let $\xi \in S^{N-1}$.
    Then
    \[
        \lim_{\rho \to 0} \psi^\rho(\xi)
            = \sup_{\rho > 0} \psi^\rho(\xi)
            = \psi(\xi).
    \]
\end{lemma}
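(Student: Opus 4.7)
The functional $\mathcal{F}^\rho$ is monotone non-increasing in $\rho$ since $J^\rho \leq J^{\rho'}$ whenever $\rho \geq \rho'$, so $\psi^\rho(\xi)$ is likewise monotone non-increasing in $\rho$, yielding $\lim_{\rho \to 0^+} \psi^\rho(\xi) = \sup_{\rho > 0} \psi^\rho(\xi)$. The pointwise inequality $J^\rho \leq J$ also gives $\psi^\rho(\xi) \leq \psi(\xi)$ for every $\rho > 0$, so $\sup_\rho \psi^\rho(\xi) \leq \psi(\xi)$.

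For the reverse inequality, my plan is to realize $\psi(\xi)$ as the energy of a limit of near-optimal competitors for the $\psi^\rho$, using a fixed auxiliary truncation level to supply compactness. For each $n \in \NN$, set $\rho_n = 1/n$ and choose $(C_n, u_n)$ with $u_n \in X(C_n)$ satisfying
\[
    |C_n|^{-1}\mathcal{F}^{\rho_n}(u_n, T_{C_n}) \leq \psi^{\rho_n}(\xi) + 1/n \leq \sup_{\rho > 0} \psi^\rho(\xi) + 1/n.
\]
For any fixed auxiliary scale $\rho_0 > 0$ and every $n$ with $\rho_n < \rho_0$, the inequality $J^{\rho_0} \leq J^{\rho_n}$ yields
\[
    |C_n|^{-1}\mathcal{F}^{\rho_0}(u_n, T_{C_n}) \leq |C_n|^{-1}\mathcal{F}^{\rho_n}(u_n, T_{C_n}) \leq \sup_{\rho > 0} \psi^\rho(\xi) + 1/n.
\]
The kernel $J^{\rho_0}$ is bounded near the origin and so satisfies Alberti-Bellettini's moment hypothesis (\ref{eq:abH1}); hence the compactness and lower semicontinuity machinery of \cite{alberti_non-local_1998} is available for the fixed functional $\mathcal{F}^{\rho_0}$ on the cell problem. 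After extracting a subsequence (and, if the $C_n$ vary in size or orientation inside $\xi^\perp$, passing to a limiting cube $C_*$), this gives a limit $u_* \in X(C_*)$ with
\[
    |C_*|^{-1}\mathcal{F}^{\rho_0}(u_*, T_{C_*}) \leq \liminf_n |C_n|^{-1}\mathcal{F}^{\rho_0}(u_n, T_{C_n}) \leq \sup_{\rho > 0} \psi^\rho(\xi).
\]

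Since the right-hand side is independent of $\rho_0$, I would then let $\rho_0 \to 0^+$. By monotone convergence $\mathcal{F}^{\rho_0}(u_*, T_{C_*}) \nearrow \mathcal{F}(u_*, T_{C_*})$, giving $|C_*|^{-1}\mathcal{F}(u_*, T_{C_*}) \leq \sup_\rho \psi^\rho(\xi)$. Since $u_* \in X(C_*)$ is an admissible competitor for $\psi(\xi)$, the defining infimum yields $\psi(\xi) \leq |C_*|^{-1}\mathcal{F}(u_*, T_{C_*}) \leq \sup_\rho \psi^\rho(\xi)$, completing the proof.

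The main obstacle will be the compactness step: tracking the possibly varying cubes $C_n$ and verifying that the limit $u_*$ lies in $X(C_*)$ for some valid $C_* \in \mathcal{C}_\xi$, meaning that the limit must inherit both the required periodicity and the pointwise limits $\pm 1$ at infinity along $\xi$. One likely route is a preliminary reduction showing that the infimum defining $\psi^\rho$ can be restricted to cubes of uniformly bounded size, after which the sequence lives on a common compact domain and a standard diagonal extraction produces a common periodicity cell in the limit.
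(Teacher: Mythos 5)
Your first half (monotonicity of $\rho \mapsto J^\rho$ gives both the equality of limit and supremum and the easy inequality $\sup_\rho \psi^\rho(\xi) \leq \psi(\xi)$) matches the paper exactly. For the reverse inequality your skeleton — build one limiting competitor $u_*$ from near-minimizers of the truncated problems, then send the auxiliary truncation $\rho_0 \to 0$ by monotone convergence — is also structurally the same as the paper's. But the step you defer to "the compactness and lower semicontinuity machinery" and flag as "the main obstacle" is precisely the content of the proof, and as written it does not go through. The cell problem lives on the \emph{unbounded} strip $T_C$, so $L^1_{\mathrm{loc}}$ compactness for generic near-minimizers $u_n$ does not prevent the transition layer from drifting to infinity, splitting into pieces that separate, or the limit from violating $\lim_{\langle x,\xi\rangle \to \pm\infty} u_*(x) = \pm 1$. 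If, say, $u_* \equiv -1$, then $\mathcal{F}(u_*, T_{C_*}) = 0$ but $u_* \notin X(C_*)$, and you cannot use it as a competitor for $\psi(\xi)$; finiteness of $\int_{T_{C_*}} W(u_*)$ alone does not rescue admissibility for a general (non-monotone) limit. A concentration-compactness or translation-normalization argument could be supplied, but you have not supplied it, and the issue of the varying cubes $C_n$ is likewise only gestured at.

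The paper closes this gap with a specific input you do not use: by \cite[Theorems 2.4 and 3.3]{alberti_nonlocal_1998}, for each truncation $\rho$ the infimum defining $\psi^\rho(\xi)$ is independent of the cube $C$ and is \emph{attained} by a one-dimensional profile $u^\rho(x) = \gamma_\xi^\rho(\langle x,\xi\rangle)$ with $\gamma_\xi^\rho \colon \RR \to [-1,1]$ non-decreasing. This removes the varying-cube issue entirely and replaces soft compactness by Helly's selection theorem applied to the monotone functions $\gamma_\xi^\rho$: a pointwise limit $\gamma_\xi$ is again monotone with values in $[-1,1]$, and (after normalizing by translation) the finiteness of the limiting energy forces $\gamma_\xi(\pm\infty) = \pm 1$, so the limit is admissible. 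Fatou's lemma applied along the diagonal $\mathcal{F}^{\rho_j}(u^{\rho_j}, T_C)$ then gives $\sup_\rho \psi^\rho(\xi) \geq |C|^{-1}\mathcal{F}(u, T_C) \geq \psi(\xi)$ in one stroke, without the intermediate fixed level $\rho_0$. To complete your argument you should either invoke this structure theorem for the truncated cell problems or carry out the compactness-on-the-strip argument in full; as it stands the admissibility of $u_*$ is assumed rather than proved.
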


\begin{proof}
    Recall that $\psi^\rho$ and $\psi$ are defined as
    \begin{align}
        \psi^\rho(\xi) &\defeq \inf_{C \in \mathcal{C}_\xi, u \in X(C)} \abs{C}^{-1} \mathcal{F}^\rho(u, T_C), \label{eq:psiro} \\
        \psi(\xi) &\defeq \inf_{C \in \mathcal{C}_\xi, u \in X(C)} \abs{C}^{-1} \mathcal{F}(u, T_C).
    \end{align}
    By definition, for all $x \in \RR^N \setminus \set{0}$, the function $\rho \mapsto J^\rho(x)$ is decreasing and convergences to $J(x)$ as $\rho \to 0^+$.
    Therefore, the functions $\rho \mapsto \abs{C}^{-1} \mathcal{F}^\rho(u, T_C)$ are decreasing, so the equivalence of the limit and supremum is clear.
    All that remains to show is that $\sup_{\rho > 0} \psi^\rho(\xi) \geq \psi(\xi)$.

    Assume that $\sup_{\rho > 0} \psi^\rho(\xi) < +\infty$, since otherwise there is nothing to prove.
    By \cite[Theorem 2.4, Theorem 3.3]{alberti_nonlocal_1998}, for each $\rho > 0$, the infimum in (\ref{eq:psiro}) is independent of $C$ and achieved by a function $u^\rho \colon \RR^N \to [-1,1]$ given by $u^\rho(x) \defeq \gamma_\xi^\rho(\inner{x}{\xi})$ for some non-decreasing function $\gamma_\xi^\rho \colon \RR \to [-1,1]$.
    Therefore, we may apply Helly's theorem to the family $\set{\gamma_\xi^\rho}_{\rho > 0}$ to extract a subsequence $\rho_j \to 0$ such that $\gamma_\xi^{\rho_j} \to \gamma_\xi$ pointwise for some non-decreasing $\gamma_\xi \colon \RR \to [-1,1]$.
    Define $u \colon \RR^N \to \RR$ by $u(x) \defeq \gamma_\xi(\inner{x}{\xi})$, and let $C \in \mathcal{C}_\xi$ be arbitrary.
    Then $u \in X(C)$, and we can conclude by Fatou's lemma,
    \begin{align*}
        \sup_{\rho > 0} \psi^\rho(x)
            &= \lim_{j \to \infty} \abs{C}^{-1} \mathcal{F}^{\rho_j}(u^{\rho_j}, T_C) \\
            &\geq \abs{C}^{-1} \mathcal{F}(u, T_C) \\
            &\geq \psi(\xi).
    \end{align*}
\end{proof}

\begin{lemma}\label{lem:trunc}
    Let $\varepsilon > 0$, and $u \in L^1(\Omega)$.
    Then
    \begin{align*}
        \lim_{\rho \to 0} F_\varepsilon^\rho(u)
            &= \sup_{\rho > 0} F_\varepsilon^\rho(u)
             = F_\varepsilon(u), \\
        \lim_{\rho \to 0} F^\rho(u)
            &= \sup_{\rho > 0} F^\rho(u)
             = F(u).
    \end{align*}
\end{lemma}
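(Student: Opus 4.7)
The plan is to derive both equalities from two monotonicity inputs already at hand. By construction, $J^\rho(x) = \mathbf{1}_{B_\rho^c}(x)\, J(x)$ is pointwise increasing as $\rho$ decreases, converging upward to $J(x)$ on $\RR^N \setminus \set{0}$ as $\rho \to 0^+$; and by Lemma \ref{lem:psi}, $\psi^\rho(\xi)$ likewise increases to $\psi(\xi)$ for every $\xi \in S^{N-1}$. The equivalence between the limit and the supremum in each identity is then immediate from monotonicity, and I would obtain convergence to $F_\varepsilon(u)$ and $F(u)$, respectively, by invoking the monotone convergence theorem.

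For the first identity, I fix $\varepsilon > 0$ and $u \in L^1(\Omega)$ and split $F_\varepsilon^\rho(u)$ into its double integral, with nonnegative integrand $J_\varepsilon^\rho(y-x)\abs{u(y)-u(x)}^2$, and the $\rho$-independent well term $\varepsilon^{-1}\int_\Omega W(u(x))\,dx$. Since $J_\varepsilon^\rho(y-x) = \varepsilon^{-N} J^\rho((y-x)/\varepsilon)$ increases monotonically up to $J_\varepsilon(y-x)$ as $\rho \to 0^+$ for a.e.\ pair $(x,y) \in \Omega \times \Omega$, the MCT yields $F_\varepsilon^\rho(u) \uparrow F_\varepsilon(u)$, regardless of whether this common value is finite or $+\infty$.

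For the second identity, I first dispense with the case $u \in L^1(\Omega) \setminus BV(\Omega; \set{-1,1})$: by definition $F(u) = +\infty$, and since $F^\rho$ is built analogously with $\psi^\rho$ in place of $\psi$, also $F^\rho(u) = +\infty$ for every $\rho > 0$, so the identity is trivial. If instead $u \in BV(\Omega; \set{-1,1})$, then
\[
    F^\rho(u) = \int_{S_u} \psi^\rho(\nu_u(x)) \, d\Hausdorff^{N-1}(x),
\]
with $\psi^\rho(\nu_u(x)) \uparrow \psi(\nu_u(x))$ pointwise on $S_u$ by Lemma \ref{lem:psi}. Applying the MCT to the nonnegative Radon measure $\Hausdorff^{N-1} \mres S_u$ (finite because $u \in BV$) gives $F^\rho(u) \uparrow F(u)$.

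In truth there is no real obstacle here; the lemma is essentially bookkeeping. The only minor points to flag are that $F^\rho$ should be read per the superscript convention introduced just above (defined with $\psi^\rho$ in place of $\psi$, yielding the same dichotomy on $BV(\Omega; \set{-1,1})$ versus its complement), and that although $\psi$ is known only to be upper semicontinuous on $S^{N-1}$, no continuity is required here---only the pointwise monotone convergence $\psi^\rho \uparrow \psi$ supplied by Lemma \ref{lem:psi}.
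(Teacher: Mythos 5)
Your proposal is correct and follows essentially the same route as the paper: monotonicity of $\rho \mapsto J^\rho$ gives the limit--supremum equivalence, the monotone convergence theorem handles the first identity, and Lemma \ref{lem:psi} together with monotone convergence over $\Hausdorff^{N-1} \mres S_u$ handles the second. Your explicit treatment of the case $u \notin BV(\Omega; \set{-1,1})$ is a minor bookkeeping detail the paper leaves implicit.
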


\begin{proof}
    As in the proof of Lemma \ref{lem:psi}, $F_\varepsilon^\rho$ and $F^\rho$ are decreasing in $\rho$ so the equivalence of the limit and the supremum follows.
    We may then simply pass the supremum under the integral by the monotone convergence theorem to conclude the first limit, and use Lemma \ref{lem:psi} with the monotone convergence theorem to conclude the second limit.
\end{proof}

With these lemmas, the proof of the $\Gamma-\liminf$ is now straightforward.

\begin{proof}[Proof of Theorem \ref{item:liminf}]
    Let $\varepsilon_j \to 0$, and $u_j \to u$ in $L^1(\Omega)$.
    We may suppose, without loss of generality, that
    \[
        \liminf_{j \to \infty} F_{\varepsilon_j}(u_j) < +\infty.
    \]
    By Theorem \ref{item:compactness}, $u \in BV(\Omega; \set{-1, 1})$, so we may apply \cite[Theorem 1.4]{alberti_non-local_1998} to conclude that for each $0 < \rho < 1$,
    \[
        \liminf_{j \to \infty} F_{\varepsilon_j}^\rho(u_j) \geq F^\rho(u).
    \]
    Combining this with Lemma \ref{lem:trunc} and the monotone convergence theorem, we compute
    \begin{align*}
        \liminf_{j \to \infty} F_{\varepsilon_j}(u_j)
            &= \liminf_{j \to \infty} \sup_{\rho > 0} F_{\varepsilon_j}^\rho(u_j) \\
            &\geq \sup_{\rho > 0} \liminf_{j \to \infty} F_{\varepsilon_j}^\rho(u_j) \\
            &\geq \sup_{\rho > 0} F^\rho(u)
             = F(u).
    \end{align*}
\end{proof}

\section{Modification Theorem} 

In the proof of the $\Gamma-\limsup$ inequality, we will need to patch together different optimal profiles corresponding to different directions.
In the case where $J$ has only a weak singularity at the origin, it is sufficient to do a na{\"i}ve gluing to construct a recovery sequence.
If $J$ has a strong enough singularity, then boundedness of $F_\varepsilon(u)$ enforces regularity of $u$ and a more sophisticated process is required.
The modification theorem allows us to use a fixed profile to patch continuously between the optimal profiles of different directions in a manner that does not affect $F_\varepsilon$ energetically too much.

\begin{theorem}[Modification Theorem]\label{thm:mod}
    Let $\delta > 0$.
    Let $D \subset \Omega$, $\set{u_k} \subset L^1(D; [-1, 1])$, and $\set{w_k} \subset L^1(\Omega \setminus D_\delta; [-1, 1])$ such that $u_k - w_k \to 0$ in $L^1(D \setminus D_\delta)$.
    Then there exists $\set{v_k} \subset L^p(\Omega)$ such that $v_k = u_k$ in $D_\delta$, $v_k = w_k$ in $\Omega \setminus D$, and
    \begin{equation}\label{eq:modification}
        \limsup_{k \to \infty} F_{\varepsilon_k}(v_k, \Omega)
        \leq \limsup_{k \to \infty} \left[F_{\varepsilon_k}(u_k, D) + F_{\varepsilon_k}(w_k, \Omega \setminus D_\delta)\right].
    \end{equation}
\end{theorem}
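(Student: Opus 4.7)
The strategy I would take is to define $v_k$ as a sharp-cutoff patching of $u_k$ and $w_k$ at a carefully chosen level set of the distance function $d(\cdot, \partial D)$. For each $t \in (0, \delta)$ set $v^t_k \defeq u_k \mathbf{1}_{D_t} + w_k \mathbf{1}_{\Omega \setminus D_t}$; the inclusions $D_\delta \subset D_t \subset D$, and consequently $\Omega \setminus D \subset \Omega \setminus D_t \subset \Omega \setminus D_\delta$, ensure that $v^t_k = u_k$ on $D_\delta$ and $v^t_k = w_k$ on $\Omega \setminus D$. Splitting the double integral defining the nonlocal part of $F_{\varepsilon_k}(v^t_k, \Omega)$ into the two \emph{same-side} pieces over $D_t \times D_t$ and $(\Omega \setminus D_t)^2$ and a cross piece over $D_t \times (\Omega \setminus D_t)$, then enlarging the same-side domains to $D \times D$ and $(\Omega \setminus D_\delta)^2$ respectively, yields
\[
    F_{\varepsilon_k}(v^t_k, \Omega) \leq F_{\varepsilon_k}(u_k, D) + F_{\varepsilon_k}(w_k, \Omega \setminus D_\delta) + \frac{R_k(t)}{2\varepsilon_k},
\]
where $R_k(t) \defeq \int_{D_t} \int_{\Omega \setminus D_t} J_{\varepsilon_k}(y-x) \abs{u_k(y) - w_k(x)}^2 \,dy\,dx$ is the cross interaction. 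The $W$-potential splits cleanly because $v^t_k$ is defined via indicator functions.

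To choose $t_k$ I would average $R_k$ over $t \in (0, \delta)$. By Fubini and the $1$-Lipschitz property of $d(\cdot, \partial D)$, the set $\set{t \in (0,\delta) \given y \in D_t, x \notin D_t}$ has Lebesgue measure at most $\min(\abs{y-x}, \delta)$, so
\[
    \frac{1}{\delta}\int_0^\delta R_k(t) \,dt
        \leq \frac{1}{\delta} \int_D \int_{\Omega \setminus D_\delta} J_{\varepsilon_k}(y-x) \abs{u_k(y) - w_k(x)}^2 \min(\abs{y-x}, \delta) \,dy\,dx.
\]
Chebyshev's inequality then yields some $t_k \in (0, \delta)$ with $R_k(t_k)$ at most twice the average, so setting $v_k \defeq v^{t_k}_k$ reduces the theorem to showing that this averaged quantity, divided by $\varepsilon_k$, tends to zero.

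The closing estimate is the main obstacle, and is where hypothesis (\ref{H2}) is essential. I would split by whether $\abs{y-x} > \delta$ or $\abs{y-x} \leq \delta$: the far regime contributes $O(\omega_1(\varepsilon_k/\delta)/\delta) \to 0$ by (\ref{H1}) alone. In the near regime, substituting $h = (y-x)/\varepsilon_k$ converts $\min(\abs{y-x}, \delta)$ into $\varepsilon_k\abs{h}$, exactly cancelling the prefactor $1/\varepsilon_k$. I would then decompose $\abs{u_k(y)-w_k(x)}^2 \leq 2\abs{u_k(y)-u_k(x)}^2 + 2\abs{u_k(x)-w_k(x)}^2$ (when $x \in D$), or analogously via $w_k(y)$ (when $y \in D \setminus D_\delta$), separating a \emph{self-difference} piece of the form $\int J(h)\abs{h}\abs{u_k(x+\varepsilon_k h) - u_k(x)}^2 \,dh\,dx$ from a \emph{mismatch} piece involving $\abs{u_k - w_k}^2$. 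The self-difference piece vanishes after splitting $\abs{h} \leq R$ versus $\abs{h} > R$ at a rate $R = R(\varepsilon_k) \to \infty$ tuned so that both $R\varepsilon_k F_{\varepsilon_k}(u_k, D) \to 0$ and $\omega_1(1/R) \to 0$. The mismatch piece requires a dyadic decomposition of the shells $\set{2^j \leq \abs{h} < 2^{j+1}}$ for $j \geq 0$, combined with the $L^1$-smallness of $u_k - w_k$, the bound $\int_{2^j \leq \abs{h} < 2^{j+1}} J(h)\abs{h}\,dh \leq \omega_1(2^{-j})$, and the summability $\sum_j \omega_1(2^{-j}) < \infty$, which is equivalent to (\ref{H2*}) via a geometric comparison. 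It is this last summability that is supplied by the logarithmic hypothesis (\ref{H2}) through Proposition~\ref{prop:hyp}.
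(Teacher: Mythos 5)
There is a genuine gap, and it is located exactly where the difficulty of this theorem lives: the sharp cutoff. Your construction $v^t_k = u_k \mathbf{1}_{D_t} + w_k \mathbf{1}_{\Omega\setminus D_t}$ produces functions whose nonlocal energy is generically \emph{infinite} under hypothesis (\ref{H1}). Indeed, your own closing estimate exposes the problem: after the substitution $h=(y-x)/\varepsilon_k$ the ``mismatch piece'' decouples into
\[
    \frac{C}{\delta}\left(\int_{\abs{h}\le \delta/\varepsilon_k} J(h)\abs{h}\,dh\right)\left(\int \abs{u_k-w_k}^2\,dx\right),
\]
and the first factor contains $\int_{B_1}J(h)\abs{h}\,dh$, which diverges for precisely the kernels this paper is about (e.g.\ $J(h)=\abs{h}^{-N-2s}$ with $s\ge 1/2$); the $L^1$-smallness of $u_k-w_k$ cannot compensate an infinite factor. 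Your dyadic decomposition over shells $\set{2^j\le\abs{h}<2^{j+1}}$ with $j\ge 0$ silently covers only $\abs{h}\ge 1$ and never sees the singularity at the origin, which is where the divergence sits. The failure is not merely in the estimate: for any fixed $t$, two regions touching along the hypersurface $\partial D_t$ with a nonvanishing jump $\abs{u_k-w_k}$ across it have infinite cross-energy when $J\abs{h}$ is non-integrable near $0$ (an indicator function is not in $H^s$ for $s\ge 1/2$), so $R_k(t)=+\infty$ for every $t$, the average is $+\infty$, and Chebyshev selects nothing. Your scheme is the na\"ive gluing that works under the stronger hypothesis $\int J(h)\abs{h}\,dh<+\infty$ of Alberti--Bellettini, and the paper explicitly flags that it cannot work here.

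The repair is to replace the sharp interface by a smooth transition: the paper takes $v=\varphi u+(1-\varphi)w$ with a cutoff $\varphi$ supported on a shell of width $\varepsilon$ and $\norm{\nabla\varphi}_\infty\lesssim 1/\varepsilon$, so that the mismatch term carries the factor $\abs{\varphi(y)-\varphi(x)}^2\lesssim\left(1\wedge\frac{\abs{y-x}}{\varepsilon}\right)^2$; this converts the offending integrand into $J(h)(\abs{h}\wedge 1)^2$ near the origin, which is exactly what (\ref{H1}) controls (Lemma \ref{lemma:interior}). The price is that one must then choose \emph{where} to place this $\varepsilon$-width transition layer: the paper first selects a good macroscopic shell of width $\delta/M$ (your averaging idea survives here in discrete form), and then a good $\varepsilon$-shell $A_i$ inside it; it is in summing the interactions between the chosen $\varepsilon$-shell and all the others, at distances that are integer multiples of $\varepsilon$, that the series $\sum_n\omega_1(1/n)/n$ of (\ref{H2*}) appears. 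Your instinct that (\ref{H2}) enters through a summability of $\omega_1$ over dyadic-type scales is directionally right, but it is needed for these shell-to-shell far-field interactions, not to salvage the near-origin mismatch term, which must be killed structurally by the Lipschitz cutoff.
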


The proof of this theorem follows exactly the structure of Proposition 4.1 in \cite{savin_-convergence_2012}, with additional care and many ideas from \cite{dal_maso_asymptotic_2017} taken to treat the case of a general kernel.
As such, we will need the following lemmas, adapted from \cite{dal_maso_asymptotic_2017}.
Lemma \ref{lemma:interior} will address near field interactions for $\JJ_\varepsilon$, and Lemma \ref{lemma:separation} will address far field interactions for $\JJ_\varepsilon$, where for measurable subsets $A,B \subset \Omega$, we define
\begin{align}
    F_\varepsilon(u, A)
        &\defeq \JJ_\varepsilon(u, A) + \WW_\varepsilon(u, A), \nonumber \\
    \JJ_\varepsilon(u, A)
        &\defeq \JJ_\varepsilon(u, A, A), \nonumber \\
    \JJ_\varepsilon(u, A, B)
        &\defeq \frac{1}{4\varepsilon} \int_A \int_B J_\varepsilon(y-x)\abs{u(y) - u(x)}^2 \,dy\,dx, \label{eq:J} \\
    \WW_\varepsilon(u, A)
        &\defeq \frac{1}{\varepsilon} \int_A W(u(x)) \,dx. \nonumber
\end{align}

\begin{lemma}[{cf. \cite[Lemma~5.3]{dal_maso_asymptotic_2017}}]\label{lemma:interior}
    Let $\varepsilon > 0$, let $y \in \RR^N$, let $A$ be a measurable subset of $\RR^N$, and let $g \colon A \to \RR$ be a measurable function such that
    \[
        0 \leq g(x) \leq \left((a\abs{y-x}) \wedge b\right)^2 \quad \text{ for every } x \in A,
    \]
    for some constants $a,b > 0$.
    If $b \geq a\varepsilon$, then
    \[
        \int_A J_\varepsilon(y-x)g(x) \,dx
            \leq abM_J \varepsilon,
    \]
    where $M_J$ is defined as in (\ref{H1}).
\end{lemma}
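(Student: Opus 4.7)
The plan is to reduce to a single pointwise inequality in $h = y-x$ after a change of variables. First I would bound the integrand by the hypothesis on $g$, substitute $h = y - x$, and enlarge the domain to $\mathbb{R}^N$ using $J \geq 0$:
\[
    \int_A J_\varepsilon(y-x) g(x) \,dx \leq \int_{\mathbb{R}^N} J_\varepsilon(h)\bigl((a\abs{h}) \wedge b\bigr)^2 \,dh.
\]
Then I would rescale $h = \varepsilon k$, so that $J_\varepsilon(h)\,dh = J(k)\,dk$ and the integral becomes
\[
    \int_{\mathbb{R}^N} J(k) \bigl((a\varepsilon\abs{k}) \wedge b\bigr)^2 \,dk.
\]
At this point the claim reduces to establishing the pointwise inequality
\[
    \bigl((a\varepsilon\abs{k}) \wedge b\bigr)^2 \leq ab\varepsilon\bigl(\abs{k} \wedge \abs{k}^2\bigr) \qquad \text{for all } k \in \mathbb{R}^N,
\]
under the assumption $b \geq a\varepsilon$, since integrating against $J$ then yields exactly $ab M_J \varepsilon$ by the definition (\ref{H1}).

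To prove the pointwise inequality, I would normalize by setting $s \defeq a\varepsilon/b \in (0,1]$, so that the inequality rearranges to $(s\abs{k} \wedge 1)^2 \leq s(\abs{k} \wedge \abs{k}^2)$. A short case analysis then handles it: when $\abs{k} \leq 1$ the right side is $s\abs{k}^2$ and the left is at most $s^2\abs{k}^2 \leq s\abs{k}^2$; when $\abs{k} > 1$ the right side is $s\abs{k}$, and either $s\abs{k} \leq 1$ (giving left side $s^2\abs{k}^2 \leq s\abs{k}$) or $s\abs{k} > 1$ (giving left side $1 \leq s\abs{k}$). Each case uses only that $s \leq 1$ together with an elementary comparison.

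The only delicate point is the interplay of the two cut-offs: the truncation $b$ in the bound on $g$ and the truncation built into the definition of $M_J$ via $\abs{k} \wedge \abs{k}^2$. The hypothesis $b \geq a\varepsilon$ is exactly what aligns these truncations after the rescaling $h = \varepsilon k$, since it ensures $s \leq 1$. I do not anticipate any obstacles beyond making this case split carefully; the rest is standard change of variables and an appeal to the finiteness of $M_J$ from (\ref{H1}).
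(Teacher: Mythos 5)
Your proof is correct and follows essentially the same route as the paper: the same change of variables $h = (y-x)/\varepsilon$ followed by a comparison of the truncated quadratic $\left((a\varepsilon\abs{k}) \wedge b\right)^2$ against $ab\varepsilon\left(\abs{k} \wedge \abs{k}^2\right)$ using $a\varepsilon \leq b$. The only difference is cosmetic — you package the comparison as a single pointwise inequality with the normalization $s = a\varepsilon/b$, whereas the paper performs the equivalent case split region by region under the integral ($B_1$, $B_{b/(a\varepsilon)} \setminus B_1$, and the complement of $B_{b/(a\varepsilon)}$).
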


\begin{proof}
    Using the change of variables $h \defeq (y-x)/\varepsilon$, we get
    \begin{align*}
        \int_A J_\varepsilon(y-x)g(x) \,dx
            &\leq \int_{\RR^N} J(h) \left((a\varepsilon\abs{h}) \wedge b\right)^2 \,dh \\
            &= a^2 \varepsilon^2 \int_{B_{b/(a\varepsilon)}} J(h) \abs{h}^2 \,dh \\
            &\qquad+ b^2 \int_{\RR^N \setminus B_{b/(a\varepsilon)}} J(h) \,dh \\
            &= a^2 \varepsilon^2 \int_{B_1} J(h) \abs{h}^2 \,dh
             + a^2 \varepsilon^2 \int_{B_{b/(a\varepsilon)} \setminus B_1} J(h) \abs{h}^2 \,dh \\
            &\qquad+ b^2 \int_{\RR^N \setminus B_{b/(a\varepsilon)}} J(h) \,dh \\
            &\leq a^2 \varepsilon^2 \int_{B_1} J(h) \abs{h}^2 \,dh
             + ab \varepsilon \int_{\RR^N \setminus B_1} J(h) \abs{h} \,dh \\
            &\leq abM_J \varepsilon.
    \end{align*}
\end{proof}

\begin{lemma}[{cf. \cite[Lemma~5.4]{dal_maso_asymptotic_2017}}]\label{lemma:separation}
    Let $\varepsilon > 0$, $\delta > 0$, let $E, F$ be open sets in $\RR^N$ with $d(E,F) \geq \delta$, and let $u \in L^2(E \cup F)$.
    Then,
    \[
        \JJ_\varepsilon(u, E, F)
            \leq \frac{1}{2\delta} \omega_1\left(\frac{\varepsilon}{\delta}\right) \int_{E \cup F} \abs{u(x)}^2 \,dx,
    \]
    where $\omega_1$ is as defined in (\ref{eq:omega1}).
\end{lemma}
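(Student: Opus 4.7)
}

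The plan is to bound the pointwise integrand $|u(y)-u(x)|^2$ by $2|u(y)|^2 + 2|u(x)|^2$, split the resulting double integral into two pieces, and in each piece integrate out the free variable against $J_\varepsilon$ on the region $\{|y-x| \geq \delta\}$, producing the tail integral that defines $\omega_1(\varepsilon/\delta)$.

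Concretely, I would first write
\begin{equation*}
    \JJ_\varepsilon(u,E,F)
        \leq \frac{1}{2\varepsilon}\int_E\int_F J_\varepsilon(y-x)|u(x)|^2\,dy\,dx
         + \frac{1}{2\varepsilon}\int_E\int_F J_\varepsilon(y-x)|u(y)|^2\,dy\,dx,
\end{equation*}
using $|a-b|^2 \leq 2a^2 + 2b^2$ (so the $\tfrac{1}{4}$ in (\ref{eq:J}) becomes $\tfrac{1}{2}$). By Fubini's theorem the first term equals
\[
    \frac{1}{2\varepsilon}\int_E |u(x)|^2 \left(\int_F J_\varepsilon(y-x)\,dy\right)dx,
\]
and analogously for the second with the roles of $E$ and $F$ swapped.

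Next I would estimate the inner integral. Since $d(E,F)\geq\delta$, for $x\in E$ fixed the set $F$ is contained in $\{y : |y-x|\geq \delta\}$; substituting $h = (y-x)/\varepsilon$ gives
\[
    \int_F J_\varepsilon(y-x)\,dy
        \leq \int_{\{|h|\geq \delta/\varepsilon\}} J(h)\,dh.
\]
On this domain one has $1\leq \tfrac{\varepsilon}{\delta}|h|$, so the right-hand side is bounded by $\tfrac{\varepsilon}{\delta}\int_{B_{\delta/\varepsilon}^c} J(h)|h|\,dh = \tfrac{\varepsilon}{\delta}\omega_1(\varepsilon/\delta)$. This is the one manipulation that really uses the separation hypothesis, and it is the step most likely to cause confusion if the factors of $\varepsilon$ and $\delta$ are tracked carelessly — but it is not deep.

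Finally, inserting this bound into both terms and using that $E$ and $F$ are disjoint (which follows from $\delta>0$) to combine $\int_E|u|^2 + \int_F|u|^2 = \int_{E\cup F}|u|^2$, I obtain
\[
    \JJ_\varepsilon(u,E,F)
        \leq \frac{1}{2\varepsilon}\cdot\frac{\varepsilon}{\delta}\omega_1(\varepsilon/\delta)\int_{E\cup F}|u(x)|^2\,dx
        = \frac{1}{2\delta}\omega_1(\varepsilon/\delta)\int_{E\cup F}|u(x)|^2\,dx,
\]
as desired. There is no real obstacle here; this is essentially just Cauchy–Schwarz-style splitting combined with the elementary trick of inserting $|h|/|h|$ to produce the $\omega_1$ tail.
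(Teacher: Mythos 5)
Your proof is correct and follows essentially the same route as the paper: split $|u(y)-u(x)|^2 \leq 2|u(x)|^2+2|u(y)|^2$, use Fubini and the separation $d(E,F)\geq\delta$ to reduce the inner integral to a tail integral of $J$ over $B_{\delta/\varepsilon}^c$, and insert $1\leq \tfrac{\varepsilon}{\delta}|h|$ to produce $\omega_1(\varepsilon/\delta)$. All constants are tracked correctly, so nothing further is needed.
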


\begin{proof}
    Using the change of variables $h = (y-x)/\varepsilon$ and the fact that $J$ is even, we have
    \begin{align*}
        \JJ_\varepsilon(u, E, F)
            &= \frac{1}{4\varepsilon} \int_E \int_F J_\varepsilon(y-x) \abs{u(y) - u(x)}^2 \,dy\,dx \\
            &\leq \frac{1}{2\varepsilon} \int_E \int_F J_\varepsilon(y-x) \,dy \abs{u(x)}^2 \,dx \\
            &\quad+ \frac{1}{2\varepsilon} \int_F \int_E J_\varepsilon(x-y) \,dx \abs{u(y)}^2 \,dy \\
            &\leq \frac{1}{2\varepsilon} \int_E \int_{\RR^N \setminus B_{\delta/\varepsilon}} J(h) \,dh \,\abs{u(x)}^2 \,dx \\
            &\quad+ \frac{1}{2\varepsilon} \int_F \int_{\RR^N \setminus B_{\delta/\varepsilon}} J(h) \,dh \,\abs{u(y)}^2 \,dy \\
            &= \frac{1}{2\varepsilon} \left(\int_{\RR^N \setminus B_{\delta/\varepsilon}} J(h) \,dh\right) \left(\int_{E \cup F} \abs{u(x)}^2 \,dx\right) \\
            &\leq \frac{1}{2\delta} \omega_1\left(\frac{\varepsilon}{\delta}\right) \int_{E \cup F} \abs{u(x)}^2 \,dx.
    \end{align*}
\end{proof}

\begin{proof}[Proof of Theorem \ref{thm:mod}]
    To simplify the presentation, we will suppress reference to the subscript $k$.
    If the right hand side of (\ref{eq:modification}) is infinite, then there is nothing to prove, so suppose that there exists some constant $C > 0$ such that for all $\varepsilon > 0$,
    \begin{equation}\label{Fbounds}
        F_\varepsilon(u, D) + F_\varepsilon(w, \Omega \setminus D_\delta) \leq C.
    \end{equation}
    Therefore, by (\ref{eq:J}), we get the kinetic energy bounds
    \begin{equation}\label{kineticbounds}
        \JJ_\varepsilon(u, D \setminus D_\delta, D) + \JJ_\varepsilon(w, D \setminus D_\delta, \Omega \setminus D_\delta) \leq C.
    \end{equation}
    Choose $\sigma > 0$, and let $\tilde{\delta} \defeq \frac{\delta}{M}$ for some large $M$ depending only on $\sigma$.
    Partition $D \setminus D_\delta$ into $M$ shells $D_{j\tilde{\delta}} \setminus D_{(j+1)\tilde{\delta}}$, and rewrite the term above into a sum over the shells:
    \begin{align*}
        \JJ_\varepsilon(&u, D \setminus D_\delta, D) + \JJ_\varepsilon(w, D \setminus D_\delta, \Omega \setminus D_\delta) \\
            &= \sum_{j = 0}^{M-1} \left(\JJ_\varepsilon(u, D_{j\tilde{\delta}} \setminus D_{(j+1)\tilde{\delta}}, D) + \JJ_\varepsilon(w, D_{j\tilde{\delta}} \setminus D_{(j+1)\tilde{\delta}}, \Omega \setminus D_\delta)\right).
    \end{align*}
    Therefore, provided we choose $M \geq C/\sigma$, by (\ref{kineticbounds}) there exists a distinguished $0 \leq j \leq M-1$ such that, denoting $\tilde{D} \defeq D_{j\tilde{\delta}}$, we have
    \begin{equation}\label{shell1}
        \JJ_\varepsilon(u, \tilde{D} \setminus \tilde{D}_{\tilde{\delta}}, D) + \JJ_\varepsilon(w, \tilde{D} \setminus \tilde{D}_{\tilde{\delta}}, \Omega \setminus D_\delta) \leq \sigma.
    \end{equation}

    Within the shell $\tilde{D} \setminus \tilde{D}_{\tilde{\delta}}$, we will further consider shells of width $\varepsilon \ll \tilde{\delta}$.
    For $0 \leq i \leq K-1$ with $K$ the integer part of $\tilde{\delta}/(2\varepsilon)$, define the shells
    \begin{equation}
        A_i \defeq \set{x \in \tilde{D} \given i\varepsilon < d(x, \partial \tilde{D}) < (i+1)\varepsilon}. \label{eq:Ai}
    \end{equation}
    Furthermore, denote $d_i(x) \defeq d(x, \partial \tilde{D}_{i\varepsilon})$.

    Consider the sum
    \begin{equation}\label{eq:innershellsum}
        \sum_{i=0}^{K-1} \int_{\tilde{D}_{i\varepsilon} \setminus \tilde{D}_{\tilde{\delta}}} \abs{u-w}\min\set*{\omega_1(1), \omega_1\left(\frac{\varepsilon}{d_i(x)}\right) \frac{\varepsilon}{d_i(x)}} \,dx,
    \end{equation}
    which is bounded above by the integral
    \begin{equation}\label{eq:innershellintegral}
        \int_{\tilde{D} \setminus \tilde{D}_{\tilde{\delta}}} \abs{u-w} \sum_{i=0}^{K-1} \min\set*{\omega_1(1), \omega_1\left(\frac{\varepsilon}{d_i(x)}\right) \frac{\varepsilon}{d_i(x)}} \,dx.
    \end{equation}
    We observe that the integral in (\ref{eq:innershellsum}) is not taken over the slice $A_i$, but rather over $\tilde{D}_{i\varepsilon} \setminus \tilde{D}_{\tilde{\delta}} = A_i \cup \left(\tilde{D}_{(i+1)\varepsilon} \setminus \tilde{D}_{\tilde{\delta}}\right)$.
    Since $t \mapsto \omega_1(t) t$ is increasing, the minimum is determined by whether $\varepsilon \leq d_i(x)$ or $d_i(x) \leq \varepsilon$.

    Fix $x \in \tilde{D} \setminus \tilde{D}_{\tilde{\delta}}$, and let $i_x \in \NN \cap [0, 2K]$ indicate the unique slice $A_{i_x}$ such that $x \in A_{i_x}$.
    If $\abs{i - i_x} \geq 2$, then $d_i(x) \geq \varepsilon(\abs{i-i_x} - 1)$.
    Using again the fact that $t \mapsto \omega_1(t)t$ is increasing, we estimate
    \begin{align*}
        \sum_{i=0}^{K-1} &\min\set*{\omega_1(1), \omega_1\left(\frac{\varepsilon}{d_i(x)}\right) \frac{\varepsilon}{d_i(x)}} \\
            &\leq \sum_{\substack{i = 0 \\ \abs{i - i_x} \leq 1}}^{K-1} \omega_1(1)
                + \sum_{\substack{i = 0 \\ \abs{i - i_x} \geq 2}}^{K-1} \omega_1\left(\frac{\varepsilon}{d_i(x)}\right)\frac{\varepsilon}{d_i(x)} \\
            &\leq 3\omega_1(1) + \sum_{\substack{i = 0 \\ \abs{i - i_x} \geq 2}}^{K-1} \omega_1\left(\frac{1}{\abs{i-i_x}}\right)\frac{1}{\abs{i-i_x}} \\
            &\leq 3\omega_1(1) + 2\sum_{n=1}^\infty \omega_1\left(\frac{1}{n}\right)\frac{1}{n}.
    \end{align*}
    By hypothesis (\ref{H2*}), this is finite, so the integral (\ref{eq:innershellintegral}) is bounded above by a universal constant times $\norm{u - w}_{L^1(D \setminus D_\delta)}$.
    Since $u_k - w_k \to 0$ in $L^1(D \setminus D_\delta)$ as $k \to \infty$, this can be made as small as desired in the limit, so in particular we choose it small enough so that
    \[
        \sum_{i=0}^{K-1} \int_{\tilde{D}_{i\varepsilon} \setminus \tilde{D}_{\tilde{\delta}}} \abs{u-w}\min\set*{\omega_1(1), \omega_1\left(\frac{\varepsilon}{d_i(x)}\right) \frac{\varepsilon}{d_i(x)}} \,dx
            \leq \frac{\tilde{\delta}\sigma}{2}.
    \]

    Since $K$ is of the order $\frac{\tilde{\delta}}{2\varepsilon}$, there exists $0 \leq i \leq K-1$ such that
    \begin{equation}\label{shell2}
        \begin{split}
            \int_{A_i} \abs{u - w} &\omega_1(1) \,dx + \int_{\tilde{D}_{(i+1)\varepsilon} \setminus \tilde{D}_{\tilde{\delta}}} \abs{u - w} \omega_1\left(\frac{\varepsilon}{d_i(x)}\right)\frac{\varepsilon}{d_i(x)} \,dx \\
                &= \int_{\tilde{D}_{i\varepsilon} \setminus \tilde{D}_{\tilde{\delta}}} \abs{u-w}\min\set*{\omega_1(1), \omega_1\left(\frac{\varepsilon}{d_i(x)}\right) \frac{\varepsilon}{d_i(x)}} \,dx
                \leq \sigma \varepsilon.
        \end{split}
    \end{equation}

    Now, partition $\Omega$ into 4 disjoint sets:
    \begin{align*}
        P \defeq \tilde{D}_{\tilde{\delta}}, \qquad
        Q \defeq \tilde{D}_{(i+1)\varepsilon} \setminus \tilde{D}_{\tilde{\delta}}, \qquad
        R \defeq A_i, \qquad
        S \defeq \Omega \setminus \tilde{D}_{i\varepsilon}.
    \end{align*}
    Define $v \defeq \varphi u + (1-\varphi) w$, where $\varphi$ is a smooth cutoff function with $\varphi \equiv 1$ on $P \cup Q$, $\varphi \equiv 0$ on $S$, and $\norm{\nabla \varphi}_\infty \leq \frac{3}{\varepsilon}$ (by (\ref{eq:Ai}) the width of $R$ is of order $\varepsilon$).
    If we rearrange the nonlocal interactions appropriately, by (\ref{eq:J}) we can bound
    \begin{equation}\label{eq:nonlocal}
        \begin{split}
            \JJ_\varepsilon(v, \Omega)
                &\leq \JJ_\varepsilon(u, D) + \JJ_\varepsilon(w, \Omega \setminus D_\delta) + \JJ_\varepsilon(v, R) \\
                &\qquad +\JJ_\varepsilon(v, P, R \cup S) + \JJ_\varepsilon(v, Q, R \cup S) + 2\JJ_\varepsilon(v, R, S),
        \end{split}
    \end{equation}
    so inequality (\ref{eq:modification}) is established if we show that each of the last four terms goes to zero in the limit.

    First, we bound $\JJ_\varepsilon(v, R)$.
    Using convexity of the map $z \mapsto \abs{z}^2$, we have
    \[
        \abs{v(y) - v(x)}^2
            \lesssim \abs{u(y) - u(x)}^2 + \abs{w(y) - w(x)}^2 + \abs{u(x) - w(x)}^2 \abs{\varphi(y) - \varphi(x)}^2,
    \]
    where the notation $A \lesssim B$ means that there is some universal constant $C$ such that $A \leq CB$.
   Therefore, we can bound
    \begin{align*}
        \JJ_\varepsilon(v, R)
            &\lesssim \JJ_\varepsilon(u, R) + \JJ_\varepsilon(w, R) \\
            &\qquad + \frac{1}{4\varepsilon} \int_R \int_R J_\varepsilon(y-x)\abs{\varphi(y) - \varphi(x)}^2 \,dy \abs{u(x) - w(x)}^2 \,dx.
    \end{align*}
    By (\ref{shell1}), the first two terms are $\leq \sigma$.
    Since $\abs{u-w} \leq 2$ and $\abs{\varphi(y) - \varphi(x)} \leq 3(1 \wedge \frac{1}{\varepsilon}\abs{y - x})$, by (\ref{shell2}) and Lemma \ref{lemma:interior}, we obtain
    \begin{align*}
        \frac{1}{4\varepsilon} \int_R \int_R J_\varepsilon(y-x) \abs{\varphi(y) &- \varphi(x)}^2 \,dy \abs{u(x) - w(x)}^2 \,dx \\
            &\leq \frac{9M_J}{2\varepsilon} \int_R \abs{u-w} \,dx \\
            &\leq \frac{9M_J}{2\omega_1(1)} \sigma.
    \end{align*}
    Therefore, $\JJ_\varepsilon(v, R) \lesssim \sigma$.

    Next, we bound $\JJ_\varepsilon(v, P, R \cup S)$.
    Since $\abs{v(y) - v(x)} \leq 2$ and $d(P, R \cup S) \geq \tilde{\delta}/2$, we can bound
    \begin{align*}
        \JJ_\varepsilon(v, P, R \cup S)
            &= \frac{1}{4\varepsilon} \int_P \int_{R \cup S} J_\varepsilon(y-x) \abs{v(y) - v(x)}^2 \,dy\, dx \\
            &\leq \frac{1}{\varepsilon} \int_P \int_{B_{\tilde{\delta}/2}(x)^c} J_\varepsilon(y-x) \,dy\,dx \\
            &\leq \frac{1}{\varepsilon} \int_\Omega \int_{B_{\tilde{\delta}/(2\varepsilon)}^c} J(h) \,dh\,dx \\
            &\leq \frac{2\abs{\Omega}}{\tilde{\delta}} \omega_1\left(\frac{2\varepsilon}{\tilde{\delta}}\right).
    \end{align*}

    We will estimate $\JJ_\varepsilon(v, Q, R \cup S)$ in two steps.
    If $x \in Q, y \in S$, then $\abs{y-x} \geq d_i(x)$ and $\abs{v(y) - v(x)}^2 \leq 2\abs{w(y) - w(x)}^2 + 2\abs{u(x) - w(x)}^2$.
    Therefore, using (\ref{eq:omega1}), (\ref{shell1}), and (\ref{shell2}) we get
    \begin{align*}
        \JJ_\varepsilon(v, Q, S)
            &\leq 2\JJ_\varepsilon(w, Q, S) + \frac{1}{2\varepsilon} \int_Q \int_S J_\varepsilon(y-x) \,dy\, \abs{u(x) - w(x)}^2 \,dx \\
            &\leq 2\JJ_\varepsilon(w, Q, S) + \frac{1}{\varepsilon} \int_Q \abs{u(x) - w(x)} \omega_1\left(\frac{\varepsilon}{d_i(x)}\right) \frac{\varepsilon}{d_i(x)} \,dx \\
            &\lesssim \sigma.
    \end{align*}

    On the other hand, if $x \in R, y \in Q$, then $\abs{y-x} \geq d_{i+1}(x)$ and $\abs{1 - \varphi(x)} \leq \frac{3}{\varepsilon}d_{i+1}(x)$ by the gradient bounds on $\varphi$.
    Furthermore, by the convexity of $z \mapsto \abs{z}^2$, also $\abs{v(y) - v(x)}^2 \leq 2\abs{u(y) - u(x)}^2 + 2\abs{1 - \varphi(x)}^2 \abs{u(x) - w(x)}^2$.
    We thus have
    \begin{align*}
        \JJ_\varepsilon(v, R, Q)
            &\leq 2\JJ_\varepsilon(u, R, Q) \\
            &\quad\qquad+ \frac{1}{2\varepsilon} \int_R \int_Q J_\varepsilon(y-x) \,dy \abs{1 - \varphi(x)}^2 \abs{u(x) - w(x)}^2 \,dx \\
            &\leq 2\JJ_\varepsilon(u, R, Q) \\
            &\quad\qquad+ \frac{9}{\varepsilon} \int_R \omega_1\left(\frac{\varepsilon}{d_{i+1}(x)}\right) \frac{d_{i+1}(x)}{\varepsilon} \abs{u(x) - w(x)} \,dx.
    \end{align*}
     Note that $\frac{\varepsilon}{d_{i+1}(x)} \geq 1$ for all $x \in R$.
     Since $\omega_1(t)/t \leq M_J$ for all $t \geq 1$, we have
     \[
         \omega_1\left(\frac{\varepsilon}{d_{i+1}(x)}\right) \frac{d_{i+1}(x)}{\varepsilon} \leq M_J,
     \]
     for all $x \in R$.
     Taking this into account, and again using (\ref{shell1}) and (\ref{shell2}), we deduce that
     \[
         \JJ_\varepsilon(v, R, Q) \lesssim \sigma.
     \]
     Therefore, combining these results, we get
     \[
         \JJ_\varepsilon(v, Q, R \cup S) \lesssim \sigma.
     \]

     Lastly, we bound $\JJ_\varepsilon(v, R, S)$.
     If $x \in R, y \in S$, then $\abs{\varphi(x)} \leq \frac{3}{\varepsilon} d_i(x)$ and $\abs{v(y) - v(x)}^2 \leq 2\abs{u(y) - u(x)}^2 + 2\abs{\varphi(x)}^2 \abs{u(x) - w(x)}^2$.
     Therefore, using the same argument as that to bound $\JJ_\varepsilon(v, R, Q)$, we get
     \[
         \JJ_\varepsilon(v, R, S) \lesssim \sigma.
     \]

     Substituting these bounds into (\ref{eq:nonlocal}), this gives (for arbitrary $\sigma > 0$)
     \begin{equation}
         \limsup_{k \to \infty} \JJ_{\varepsilon_k}(v_k, \Omega)
            \leq \limsup_{k \to \infty} \left(\JJ_{\varepsilon_k}(u_k, D) + \JJ_{\varepsilon_k}(w_k, \Omega \setminus D_\delta)\right) + C\sigma,
    \end{equation}
    where $C$ is a constant depending only on $M_J$.

    All that remains is to bound the potential term $\WW_\varepsilon(v, \Omega)$, but for this we can follow exactly the same argument as \cite{savin_-convergence_2012}.
    Letting $\sigma \to 0^+$ completes the proof.
\end{proof}

\section{$\Gamma$-Limsup Inequality}

The proof of Theorem \ref{item:limsup} is a simple consequence of the following theorem.

\begin{theorem}\label{thm:limsup}
    For every $u \in BV(\Omega; \set{-1, 1})$, there exists a sequence $u_j \to u$ in $L^p(\Omega)$ such that
    \[
        \limsup_{j \to \infty} F_{\varepsilon_j}(u_j, \Omega) \leq \int_{S_u} \psi(\nu_u) \,d\Hausdorff^{N-1}.
    \]
\end{theorem}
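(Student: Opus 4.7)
The plan is to implement the standard three-step program for $\Gamma$-limsup in phase-transition problems: reduce to polyhedral interfaces, build a nearly optimal one-dimensional transition profile on each face via the cell formula (\ref{eq:psi}), and glue these local profiles along the singular skeleton using the Modification Theorem \ref{thm:mod}.

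First, I would reduce to the case $u = 2\mathbf{1}_P - 1$ where $P \subset \Omega$ is polyhedral, so that $S_u$ is the disjoint union of open $(N-1)$-polytopes $\Sigma_1, \ldots, \Sigma_m$ with unit normals $\xi_1, \ldots, \xi_m$, together with an $(N-2)$-dimensional singular skeleton $\Sigma_{\mathrm{sing}}$. Since $\psi$ is only upper semicontinuous, this reduction proceeds in two sub-steps: approximate $\psi$ from above by continuous anisotropies $\psi^{(\ell)} \searrow \psi$, carry out the construction against the continuous surrogate $\psi^{(\ell)}$ using the standard $L^1$-density of polyhedral sets (for which $\int_{S_{u^n}} \psi^{(\ell)}(\nu_{u^n})\,d\Hausdorff^{N-1} \to \int_{S_u} \psi^{(\ell)}(\nu_u)\,d\Hausdorff^{N-1}$), and diagonalize over $\ell$ and $n$.

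Second, for each face $\Sigma_k$ and $\eta > 0$, by (\ref{eq:psi}) choose $C_k \in \mathcal{C}_{\xi_k}$ and $u^{k,\eta} \in X(C_k)$ with $\abs{C_k}^{-1}\mathcal{F}(u^{k,\eta}, T_{C_k}) \leq \psi(\xi_k) + \eta$. The candidate profile near $\Sigma_k$ is $u_\varepsilon^{k,\eta}(x) \defeq u^{k,\eta}((x - x_k)/\varepsilon)$ for a basepoint $x_k \in \Sigma_k$. A change of variables, mirroring Section 3 of \cite{alberti_non-local_1998}, shows that on any tubular slab $T$ of height $h \gg \varepsilon$ transverse to $\Sigma_k$ and projecting into $\Sigma_k$,
\[
    \limsup_{\varepsilon \to 0^+} F_\varepsilon(u_\varepsilon^{k,\eta}, T) \leq (\psi(\xi_k) + \eta)\,\Hausdorff^{N-1}(\Sigma_k).
\]
The $C_k$-periodicity of $u^{k,\eta}$ ensures compatibility of this construction with any polyhedral tiling of $\Sigma_k$.

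Third, on a $\delta$-neighborhood $D$ of $\Sigma_{\mathrm{sing}}$ with $\varepsilon \ll \delta \ll 1$, the rescaled profiles from two adjacent faces both tend to $\pm 1$ at distance $\delta \gg \varepsilon$ from their respective interfaces (since $u^{k,\eta} \to \pm 1$ at infinity by the definition of $X(C_k)$), so on the annulus $D \setminus D_\delta$ their difference vanishes in $L^1$. This is exactly the hypothesis of Theorem \ref{thm:mod}, which I would apply iteratively (once per adjacent-face pair, with nested scales when three or more faces meet) to produce a single sequence $v_j$ with
\[
    \limsup_{j \to \infty} F_{\varepsilon_j}(v_j, \Omega) \leq \sum_{k=1}^m (\psi(\xi_k) + \eta)\,\Hausdorff^{N-1}(\Sigma_k) + R(\delta).
\]
The gluing error $R(\delta)$ is bounded by $\psi_{\max}\cdot\Hausdorff^{N-1}(S_u \cap (\Sigma_{\mathrm{sing}})^\delta) = O(\delta)$ by Lemma \ref{lem:steiner} (applied to $\Sigma_{\mathrm{sing}}$, whose components are $(N-2)$-dimensional subfaces of hyperplanes). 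Sending $\delta \to 0^+$ and $\eta \to 0^+$ followed by a diagonal extraction yields the theorem.

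The main obstacle is the careful bookkeeping at the singular skeleton: ensuring that the Modification Theorem can be applied iteratively along the $(N-2)$-dimensional edges, and then again on the lower-dimensional sub-skeleton of vertices where three or more edges meet, without the successive gluings interfering with each other. This forces a hierarchy of scales $\varepsilon \ll \delta_0 \ll \delta_1 \ll \cdots$ matched to the codimension stratification of $\Sigma_{\mathrm{sing}}$, and at each level one must verify the $L^1$-convergence hypothesis of Theorem \ref{thm:mod}, which amounts to showing that each glued intermediate profile still inherits the correct $\pm 1$ behavior on the next annular region. A secondary delicacy is that because $\psi$ is merely upper semicontinuous, the polyhedral approximation must be run against continuous majorants of $\psi$ rather than $\psi$ itself, a point that becomes compatible with the limsup inequality only after the final double diagonalization.
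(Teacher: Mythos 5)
Your overall architecture (polyhedral reduction, cell-formula profiles per face, Modification Theorem to glue) matches the paper's, but the gluing step as you describe it has two genuine gaps, and they are precisely where the paper has to work hardest.

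First, the $L^1$-matching hypothesis of Theorem \ref{thm:mod} is not satisfied by direct pairwise gluing of profiles from adjacent faces. Your profile $u_\varepsilon^{k,\eta}(x) = u^{k,\eta}((x-x_k)/\varepsilon)$ converges in $L^1_{loc}$ to the sharp jump across the \emph{entire hyperplane} $x_k + \xi_k^\perp$, not to $u$. Near an edge where faces $\Sigma_k$ and $\Sigma_{k'}$ meet, the two limiting hyperplanes disagree on an open wedge of positive measure emanating from the edge, so $u_\varepsilon^{k,\eta} - u_\varepsilon^{k',\eta}$ does \emph{not} tend to zero in $L^1$ on any fixed annulus $D \setminus D_\delta$ surrounding the skeleton; your claim that "their difference vanishes in $L^1$" only holds far from the interface, not on the full annulus. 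Relatedly, the right-hand side of (\ref{eq:modification}) charges $F_\varepsilon(w, \Omega \setminus D_\delta)$ for the \emph{whole} exterior region, and an extended planar profile carries the energy of a full hyperplane section there, so pairwise gluing overcounts badly. The paper resolves both issues at once by introducing a single fixed profile $\tilde u_\varepsilon = u * \theta_\varepsilon$ (the mollification of $u$ itself, (\ref{moll})), which converges to $u$ in $L^1$ and hence matches every face profile in $L^1$ on the transition shells; each face profile is glued once to this fixed profile on a prism $R^i_\rho$ strictly containing $P^i_\rho$, and no iterative/nested-scale gluing at edges or vertices is ever needed.

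Second, your estimate of the gluing error $R(\delta)$ near the skeleton by "$\psi_{\max}\cdot\Hausdorff^{N-1}(S_u \cap (\Sigma_{\mathrm{sing}})^\delta) = O(\delta)$ by Lemma \ref{lem:steiner}" conflates a measure bound with an energy bound. Lemma \ref{lem:steiner} controls the volume of a $\delta$-neighborhood; it says nothing about the nonlocal energy $\JJ_\varepsilon$ of the function living there, which for a strongly singular kernel involves long-range $(+1)/(-1)$ interactions across a non-flat interface near the edges. This is exactly the content of Proposition \ref{prop:isotropic}, whose proof (Blaschke--Petkantschin slicing plus hypothesis (\ref{H2})/(\ref{H2*})) is the technical core of the paper and is where the novel hypothesis (\ref{H2}) enters. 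Without that estimate the $O(\delta)$ claim near the skeleton is unjustified. Your polyhedral reduction via continuous majorants of $\psi$ is workable but roundabout; the paper simply invokes Reshetnyak's theorem using the upper semicontinuity of $\psi$.
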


\begin{proof}[Proof of Theorem \ref{item:limsup} supposing Theorem \ref{thm:limsup}]
    Let $\varepsilon_j \to 0$ and $u \in L^1(\Omega)$.
    If $u \not\in BV(\Omega; \set{-1, 1})$, then $F(u) = +\infty$ and there is nothing to prove, so suppose $u \in BV(\Omega; \set{-1, 1})$.
    Now simply apply Theorem \ref{thm:limsup}.
\end{proof}

We will first prove Theorem \ref{thm:limsup} for BV functions with polyhedral jump sets, then address the general case.
In order to do so, we will need a fixed profile to glue across facets of the polyhedron, interpolating between optimal profiles in different directions.

Fix a mollifier $\theta \in C_c^\infty(\RR^N)$ such that $\supp \theta \subset B_1(0)$, $\int_{\RR^N} \theta \,dx = 1$, and $\norm{\nabla \theta}_\infty \leq 2$ and define $\theta_\tau(x) \defeq \tau^{-N}\theta(x/\tau)$ for all $\tau > 0$.
For any $u \in BV(\Omega; \set{-1, 1})$, define
\begin{equation}\label{moll}
    \tilde{u}_\varepsilon \defeq u * \theta_\varepsilon,
\end{equation}
to be our fixed profile of scale $\varepsilon$.
By construction, $\tilde{u}_\varepsilon(x) \in [-1, 1]$ for all $x \in \Omega$.

\begin{proposition}\label{prop:isotropic}
    Let $u \in BV_{loc}(\RR^N; \set{-1, 1})$, and for every $\varepsilon > 0$, let $\tilde{u}_\varepsilon$ be defined as in (\ref{moll}).
    Assume there exists a bounded polyhedral set $\Sigma$ of dimension $N-1$ such that $S_u = \Sigma$, let $\Sigma^{N-2}$ be the union of all its $(N-2)$-dimensional facets, and let $(\Sigma^{N-2})^\delta \defeq \set{x \in \RR^N \given d(x, \Sigma^{N-2}) < \delta}$.
    Then, there exist constants $\delta_\Sigma > 0, C = C(J, p, \Sigma) > 0$ such that for $0 < \varepsilon < \delta < \delta_\Sigma$, we have
    \[
        \JJ_\varepsilon(\tilde{u}_\varepsilon, (\Sigma^{N-2})^\delta)
            \leq C \delta \Hausdorff^{N-2}(\Sigma^{N-2}).
    \]
\end{proposition}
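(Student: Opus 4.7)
The plan is to decompose $A \defeq (\Sigma^{N-2})^\delta$ according to distance from $\Sigma$, writing $A = (A \cap \Sigma^\varepsilon) \sqcup (A \setminus \Sigma^\varepsilon)$ and splitting
\[
    \JJ_\varepsilon(\tilde{u}_\varepsilon, A)
        = \JJ_\varepsilon(\tilde{u}_\varepsilon, A \cap \Sigma^\varepsilon)
        + 2\JJ_\varepsilon(\tilde{u}_\varepsilon, A \cap \Sigma^\varepsilon, A \setminus \Sigma^\varepsilon)
        + \JJ_\varepsilon(\tilde{u}_\varepsilon, A \setminus \Sigma^\varepsilon),
\]
then bounding each piece by $C\delta\,\Hausdorff^{N-2}(\Sigma^{N-2})$. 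The key geometric input, valid once $\delta_\Sigma$ is smaller than the tubular-neighborhood radius of $\Sigma$, is the volume estimate $\abs{(\Sigma^{N-2})^\delta \cap \Sigma^r} \leq C r\delta\,\Hausdorff^{N-2}(\Sigma^{N-2})$ for $0 < r \leq \delta$. I would prove this by nearest-point projection onto $\Sigma$: any such point projects to some $z \in \Sigma$ with $d(z, \Sigma^{N-2}) \leq r + \delta \leq 2\delta$, and Lemma \ref{lem:steiner} applied within the $(N-1)$-dimensional polyhedral structure of $\Sigma$ bounds the $\Hausdorff^{N-1}$-measure of the projected set by $C\delta\,\Hausdorff^{N-2}(\Sigma^{N-2})$, with the corresponding tube contributing at most a factor of $2r$.

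For the two pieces involving $A \cap \Sigma^\varepsilon$ I would use the gradient bound $\norm{\nabla \tilde{u}_\varepsilon}_\infty \leq \norm{\nabla \theta}_1/\varepsilon$ combined with $\abs{\tilde{u}_\varepsilon} \leq 1$ to obtain $\abs{\tilde{u}_\varepsilon(y) - \tilde{u}_\varepsilon(x)}^2 \leq C\bigl((\abs{y-x}/\varepsilon) \wedge 1\bigr)^2$. Lemma \ref{lemma:interior} with $a = 1/\varepsilon$ and $b = 1$ then yields $\int J_\varepsilon(y-x)\bigl((\abs{y-x}/\varepsilon) \wedge 1\bigr)^2\,dy \leq M_J$ uniformly in $x$, so integrating the remaining variable over $A \cap \Sigma^\varepsilon$ and applying the volume estimate with $r = \varepsilon$ bounds both contributions by $\tfrac{C}{\varepsilon}\abs{A \cap \Sigma^\varepsilon} \lesssim \delta\,\Hausdorff^{N-2}(\Sigma^{N-2})$.

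The remaining piece $\JJ_\varepsilon(\tilde{u}_\varepsilon, A \setminus \Sigma^\varepsilon)$ is the delicate one, and is where hypothesis (\ref{H2}) enters. On $A \setminus \Sigma^\varepsilon$ the mollification is $\pm 1$-valued pointwise, because $B_\varepsilon(x)$ lies entirely on one side of $\Sigma$, so the integrand vanishes unless $x$ and $y$ lie in opposite phases; in that case the segment $[x,y]$ crosses $\Sigma$, forcing $\abs{y-x} \geq d(x, \Sigma)$. A Chebyshev bound against the definition (\ref{eq:omega1}) then gives
\[
    \JJ_\varepsilon(\tilde{u}_\varepsilon, A \setminus \Sigma^\varepsilon)
        \leq \int_{A \setminus \Sigma^\varepsilon} \frac{\omega_1(\varepsilon/d(x,\Sigma))}{d(x,\Sigma)}\,dx.
\]
I would then decompose dyadically $A \setminus \Sigma^\varepsilon = \bigsqcup_k A_k$ with $A_k \defeq \set{x \in A \given 2^k \varepsilon \leq d(x, \Sigma) < 2^{k+1}\varepsilon}$, which is empty once $2^k\varepsilon > \delta$. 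The volume estimate gives $\abs{A_k} \lesssim 2^k \varepsilon\delta\,\Hausdorff^{N-2}(\Sigma^{N-2})$, while monotonicity of $\omega_1$ bounds the integrand pointwise by $2^{-k}\omega_1(2^{-k})/\varepsilon$ on $A_k$, producing
\[
    \JJ_\varepsilon(\tilde{u}_\varepsilon, A \setminus \Sigma^\varepsilon)
        \leq C\delta\,\Hausdorff^{N-2}(\Sigma^{N-2}) \sum_{k \geq 0} \omega_1(2^{-k}).
\]

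The main obstacle is the convergence of this final series: it is comparable (by the integral test) to $\int_1^\infty \omega_1(1/s)\,\frac{ds}{s}$, which Proposition \ref{prop:hyp} identifies as finite precisely under (\ref{H2*}), i.e., (\ref{H2}). Without (\ref{H2}), the far-field interactions between $\pm$-components in $A \setminus \Sigma^\varepsilon$ genuinely blow up, which is why this decay hypothesis is essential for the modification and \emph{limsup} machinery to close.
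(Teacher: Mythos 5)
Your proposal is correct, and for the crucial far-field estimate it takes a genuinely different route from the paper. The near-field part is essentially the same in both arguments (the pointwise bound $\abs{\tilde{u}_\varepsilon(y)-\tilde{u}_\varepsilon(x)}^2\lesssim\left((\abs{y-x}/\varepsilon)\wedge 1\right)^2$ fed into Lemma \ref{lemma:interior}, followed by a tube-volume bound), but for the far field the paper applies the Blaschke--Petkantschin formula (Lemma \ref{lem:abp}), reduces to the one-dimensional quantity $F(\xi)=\int_{-\infty}^{-1}\int_1^\infty J^\xi(t-s)\,dt\,ds$, controls the projections $\hat{C}_\varepsilon^\delta\vert\xi^\perp$ via Lemma \ref{lem:steiner}, and finally shows $F\in L^1(S^{N-1})$ using (\ref{H2}); you instead layer $A\setminus\Sigma^\varepsilon$ dyadically in $d(\cdot,\Sigma)$, bound each layer's volume by $\abs{(\Sigma^{N-2})^\delta\cap\Sigma^r}\lesssim r\delta\,\Hausdorff^{N-2}(\Sigma^{N-2})$, and sum $\omega_1(2^{-k})$, which converges precisely by (\ref{H2*}) via Proposition \ref{prop:hyp}. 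Both roads lead to the same logarithmic hypothesis, so neither is quantitatively stronger; yours is more elementary (no integral geometry), while the paper's reuses slicing machinery that is stated anyway. One structural point in your favor: you cut along $\Sigma^\varepsilon$ (distance to all of $\Sigma$) rather than along $(\Sigma^{N-2})^\varepsilon$ as the paper does, and this is exactly what makes the assertions ``$\tilde{u}_\varepsilon\equiv\pm 1$ on the far-field set'' and ``opposite phases force $\abs{y-x}\geq d(x,\Sigma)$'' literally true; the paper's far-field set $\hat{C}_\varepsilon^\delta$ still contains points within $\varepsilon$ of the interior of a facet, where $\tilde{u}_\varepsilon$ is not $\pm 1$, so the paper's argument implicitly performs the excision you make explicit. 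Two details to tighten when writing this up: nearest-point projection onto the nonconvex set $\Sigma$ is not single-valued, so the volume estimate should be argued facet by facet (for $\delta_\Sigma$ small, each point of $(\Sigma^{N-2})^\delta\cap\Sigma^r$ lies within $r$ of some facet $P$ and within $C\delta$ of $\partial P$; Fubini in the normal direction plus Lemma \ref{lem:steiner} inside the hyperplane of $P$ then gives the claim); and the top dyadic layer may have $2^{k+1}\varepsilon>\delta$, where one should apply the volume estimate with $r=\delta$, which only affects constants.
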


\begin{proof}[Proof of Proposition \ref{prop:isotropic}]
    Write $(\Sigma^{N-2})^\delta$ as the disjoint union of two sets,
    \begin{align*}
        C_\varepsilon^\delta
            &\defeq \set{x \in (\Sigma^{N-2})^\delta \given d(x, \Sigma^{N-2}) < \varepsilon}, \\
        \hat{C}_\varepsilon^\delta
            &\defeq \set{x \in (\Sigma^{N-2})^\delta \given d(x, \Sigma^{N-2}) \geq \varepsilon}.
    \end{align*}
    Note that the near field set $C_\varepsilon^\delta$ is nothing more than $(\Sigma^{N-2})^\varepsilon$.

    We can break up the kinetic energy into near and far field interactions on these sets as
    \[
        \JJ_\varepsilon(\tilde{u}_\varepsilon, (\Sigma^{N-2})^\delta)
            \leq 2\JJ_\varepsilon(\tilde{u}_\varepsilon, C_\varepsilon^\delta, (\Sigma^{N-2})^\delta)
            + 2\JJ_\varepsilon(\tilde{u}_\varepsilon, \hat{C}_\varepsilon^\delta).
    \]
    Using the inequality $0 \leq \abs{\tilde{u}_\varepsilon(y) - \tilde{u}_\varepsilon(x)}^2 \leq \left(\frac{2}{\varepsilon}\abs{y-x} \wedge 2\right)^2$, we can apply Lemma \ref{lemma:interior} to get
    \begin{align*}
        \JJ_\varepsilon(\tilde{u}_\varepsilon, C_\varepsilon^\delta, (\Sigma^{N-2})^\delta)
            &= \frac{1}{4\varepsilon} \int_{C_\varepsilon^\delta} \int_{(\Sigma^{N-2})^\delta} J_\varepsilon(y-x) \abs{\tilde{u}_\varepsilon(y) - \tilde{u}_\varepsilon(x)}^2 \,dy\,dx \\
            &\leq \frac{M_J}{\varepsilon} \LL^N(C_\varepsilon^\delta) \\
            &\lesssim M_J \varepsilon \Hausdorff^{N-2}(\Sigma^{N-2})
             \leq M_J \delta \Hausdorff^{N-2}(\Sigma^{N-2}),
    \end{align*}
    for $\varepsilon$ and $\delta$ sufficiently small.

    For the second term, we note that $\tilde{u}_\varepsilon \equiv \pm 1$ on $\hat{C}_\varepsilon^\delta$, and that the subsets $\set{\tilde{u}_\varepsilon = +1}$ and $\set{\tilde{u}_\varepsilon = -1}$ are separated by a gap of size $2\varepsilon$.
    Denote by $C_+^\delta$ and $C_-^\delta$ the parts of $\hat{C}_\varepsilon^\delta$ where $\tilde{u}_\varepsilon$ equals $+1$ or $-1$ respectively.
    Applying the Blaschke-Petkantschin formula (Lemma \ref{lem:abp}) to this term, we get
    \begin{equation}\label{eq:slicing}
        \begin{split}
        &\phantom{=~} \JJ_\varepsilon(\tilde{u}_\varepsilon, \hat{C}_\varepsilon^\delta)
            = \frac{2}{\varepsilon} \int_{C_+^\delta} \int_{C_-^\delta} J_\varepsilon(y-x) \,dy \,dx \\
            &= \frac{1}{\varepsilon} \int_{\Graff(N, 1)} \int_{L \cap C_+^\delta} \int_{L \cap C_-^\delta} J_\varepsilon(y-x) \abs{y-x}^{N-1} \,d\Hausdorff^1(y) \,d\Hausdorff^1(x) \,d\lambda^N_1(L) \\
            &= \frac{1}{\varepsilon} \int_{\Graff(N, 1)} \mathcal{A}(L) \,d\lambda_1^N(L) \\
            &= \frac{1}{2\varepsilon} \int_{S^{N-1}} \int_{\xi^\perp} \mathcal{A}(z + \RR\xi) \,d\Hausdorff^{N-1}(z) \,d\Hausdorff^{N-1}(\xi),
        \end{split}
    \end{equation}
    where we have defined $\mathcal{A} \colon \Graff(N,1) \to [0, \infty)$ by
    \begin{equation}\label{eq:AA}
        \mathcal{A}(L)
            \defeq \int_{L \cap C_+^\delta} \int_{L \cap C_-^\delta} J_\varepsilon(y-x)\abs{y-x}^{N-1} \,d\Hausdorff^1(y) \,d\Hausdorff^1(x).
    \end{equation}

    Since the integrand in (\ref{eq:AA}) only depends on terms of the form $y-x$, the integrand is independent of $z$.
    Furthermore, note that $\mathcal{A}(L) \neq 0$ only if $L$ intersects both $C_+^\delta$ and $C_-^\delta$; that is, if $z \in \hat{C}_\varepsilon^\delta \vert \xi^\perp$, where $U \vert H$ denotes the projection of $U$ onto the hyperplane $H$.
    Therefore,
    \begin{equation}\label{eq:Abounds}
        \begin{split}
            \mathcal{A}&(z + \RR\xi) \\
                &= \int_{(z + \RR\xi) \cap C_+^\delta} \int_{(z + \RR\xi) \cap C_-^\delta} J_\varepsilon(y-x) \abs{y-x}^{N-1} \,d\Hausdorff^1(x) \,d\Hausdorff^1(y) \\
                &= \int_{\RR\xi \cap (C_+^\delta - z)} \int_{\RR\xi \cap (C_-^\delta - z)} J_\varepsilon(y-x) \abs{y-x}^{N-1} \,d\Hausdorff^1(x) \,d\Hausdorff^1(y) \\
                &\leq \chi_{\hat{C}_\varepsilon^\delta \vert \xi^\perp}(z) \int_{-\infty}^{-\varepsilon} \int_\varepsilon^\infty J_\varepsilon\left((t-s)\xi\right) \abs{t-s}^{N-1} \,dt \,ds \\
                &= \varepsilon \chi_{\hat{C}_\varepsilon^\delta \vert \xi^\perp}(z) \int_{-\infty}^{-1} \int_1^\infty J^\xi(t-s) \,dt \,ds,
        \end{split}
    \end{equation}
    where we define $J^\xi(t) \defeq J(t\xi)\abs{t}^{N-1}$.

    Set $F(\xi) \defeq \int_{-\infty}^{-1} \int_1^\infty J^\xi(t-s) \,dt \,ds$.
    Substituting (\ref{eq:Abounds}) back into (\ref{eq:slicing}), we get the estimate
    \begin{equation}\label{eq:ff}
        \begin{split}
            \JJ_\varepsilon(\tilde{u}_\varepsilon, \hat{C}_\varepsilon^\delta)
                &\lesssim \int_{S^{N-1}} \int_{\xi^\perp} \chi_{\hat{C}_\varepsilon^\delta \vert \xi^\perp}(z) F(\xi) \,d\Hausdorff^{N-1}(z) \,d\Hausdorff^{N-1}(\xi) \\
                &= \int_{S^{N-1}} \Hausdorff^{N-1}(\hat{C}_\varepsilon^\delta \vert \xi^\perp) F(\xi) \,d\Hausdorff^{N-1}(\xi) \\
                &\leq \int_{S^{N-1}} \Hausdorff^{N-1}((\Sigma^{N-2})^\delta \vert \xi^\perp) F(\xi) \,d\Hausdorff^{N-1}(\xi).
        \end{split}
    \end{equation}
    Recognizing $(\Sigma^{N-2})^\delta \vert \xi^\perp = (\Sigma^{N-2} \vert \xi^\perp)^\delta \subset \xi^\perp$, we can apply Lemma \ref{lem:steiner} to $X = \Sigma^{N-2} \vert \xi^\perp$ in the hyperplane $\xi^\perp \cong \RR^{N-1}$ to get that for $\delta \ll 1$,
    \[
        \Hausdorff^{N-1}((\Sigma^{N-2})^\delta \vert \xi^\perp) \lesssim \delta \Hausdorff^{N-2}(\Sigma^{N-2} \vert \xi^\perp) \leq \delta \Hausdorff^{N-2}(\Sigma^{N-2}).
    \]
    Substituting this into (\ref{eq:ff}), we find
    \[
        \JJ_\varepsilon(\tilde{u}_\varepsilon, \hat{C}_\varepsilon^\delta) \lesssim \delta \Hausdorff^{N-2}(\Sigma^{N-2}) \int_{S^{N-1}} F(\xi) \,d\Hausdorff^{N-1}(\xi).
    \]

    To conclude the bound of the far field terms, it only remains to show that $F \in L^1(S^{N-1})$, but this is a consequence of hypothesis (\ref{H2*}).
    To be precise,
    \begin{align*}
        \int_{S^{N-1}} F(\xi) \,d\Hausdorff^{N-1}(\xi)
            &= \int_{S^{N-1}} \int_{-\infty}^{-1} \int_1^\infty J^\xi(t-s) \,dt \,ds \,d\Hausdorff^{N-1}(\xi) \\
            &= \int_{S^{N-1}} \int_2^\infty \int_s^\infty J^\xi(t) \,dt \,ds \,d\Hausdorff^{N-1}(\xi) \\
            &\leq \int_{S^{N-1}} \int_2^\infty \int_s^\infty \frac{t}{s} J^\xi(t) \,dt \,ds \,d\Hausdorff^{N-1}(\xi) \\
            &= \int_{S^{N-1}} \int_2^\infty t J^\xi(t) \int_2^t \frac{1}{s} \,ds \,dt \,d\Hausdorff^{N-1}(\xi) \\
            &\leq \int_{S^{N-1}} \int_2^\infty t J^\xi(t) \ln t \,dt \,d\Hausdorff^{N-1}(\xi) \\
            &= \int_{B_2^c} J(h) \abs{h} \ln\abs{h} \,dh < +\infty.
    \end{align*}

    Combining the near field bounds and far field bounds, we conclude.
\end{proof}

The proof of the $\Gamma$-limsup requires the following two lemmas, from which Theorem \ref{thm:limsup} will follow.
The proof of Lemma \ref{lem:poly} is identical to step 2 in the proof of Theorem 5.2 in \cite{alberti_non-local_1998}, so will be omitted (see also \cite[Lemma 7.3]{dal_maso_asymptotic_2017}).
\begin{lemma}\label{lem:poly}
    Let $P$ be a bounded polyhedron of dimension $N-1$ containing 0 with normal $\nu$, let $\rho > 0$, and let $P_\rho$ be the $N$-dimensional prism $\set{x + t\nu \given x \in P, t \in (-\frac{\rho}{2}, \frac{\rho}{2})}$.
    Define $w^\nu \colon P^\rho \to \RR$ by
    \[
        w^\nu(x) = \begin{cases}
            +1 \textup{ if } x \cdot \nu > 0, \\
            -1 \textup{ if } x \cdot \nu \leq 0.
        \end{cases}
    \]
    Then, for every $\eta > 0$ there exists a sequence $\set{u_\varepsilon} \subset L^p(P_\rho; [-1, 1])$ such that $u_\varepsilon \to w^\nu$ in $L^p(P_\rho)$ and
    \[
        \limsup_{\varepsilon \to 0} F_\varepsilon(u_\varepsilon, P_\rho)
            \leq \Hausdorff^{N-1}(P)\left(\psi(\nu) + \eta\right).
    \]
\end{lemma}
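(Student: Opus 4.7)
The plan is to construct $u_\varepsilon$ by rescaling a near-optimal cell profile for $\psi(\nu)$ and exploiting its periodicity to tile the polyhedron $P$. First, using the cell formula (\ref{eq:psi}), choose $C \in \mathcal{C}_\nu$ and $u \in X(C)$ with $\abs{C}^{-1}\mathcal{F}(u, T_C) \le \psi(\nu) + \eta/2$, and set $u_\varepsilon(x) \defeq u(x/\varepsilon)$ on $P_\rho$, understanding $u$ as its $C$-periodic extension to $\RR^N$. Since $u \in X(C)$ satisfies $\lim_{\inner{y}{\nu}\to \pm\infty} u(y) = \pm 1$ and $\inner{x}{\nu}/\varepsilon \to \pm\infty$ whenever $\inner{x}{\nu} \gtrless 0$, the pointwise limit $u_\varepsilon(x) \to w^\nu(x)$ holds for a.e.\ $x \in P_\rho$. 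Combined with $\abs{u_\varepsilon} \le 1$ and $\abs{P_\rho} < +\infty$, dominated convergence yields $u_\varepsilon \to w^\nu$ in $L^p(P_\rho)$.

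The main step is the energy estimate. Performing the change of variables $x' = x/\varepsilon$, $y' = y/\varepsilon$ in $F_\varepsilon(u_\varepsilon, P_\rho)$ gives
\[
    F_\varepsilon(u_\varepsilon, P_\rho) \leq \varepsilon^{N-1}\,\mathcal{F}(u, P_\rho/\varepsilon),
\]
where the inequality (as opposed to equality) accounts for the inner integral in $F_\varepsilon(\cdot, P_\rho)$ being over $P_\rho$ while $\mathcal{F}$ integrates over all of $\RR^N$. To bound the right-hand side, I cover the polyhedral cross-section $P/\varepsilon \subset \nu^\perp$ by $M_\varepsilon$ translates $c_i + C$, choosing each $c_i$ in the $C$-periodicity lattice of $u$; a volume count yields $\varepsilon^{N-1} M_\varepsilon \abs{C} \leq \Hausdorff^{N-1}(P) + \BigO(\varepsilon)$, with the correction arising from boundary tiles overlapping $\partial P/\varepsilon$. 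By $C$-periodicity of $u$ and monotonicity of $\mathcal{F}$ in its domain,
\[
    \mathcal{F}\bigl(u, (c_i + C) + (-\tfrac{\rho}{2\varepsilon}, \tfrac{\rho}{2\varepsilon})\nu\bigr) \leq \mathcal{F}(u, T_{c_i + C}) = \mathcal{F}(u, T_C),
\]
so summing over tiles gives $\varepsilon^{N-1}\mathcal{F}(u, P_\rho/\varepsilon) \leq \varepsilon^{N-1} M_\varepsilon \mathcal{F}(u, T_C) \leq \Hausdorff^{N-1}(P)(\psi(\nu) + \eta/2) + \BigO(\varepsilon)$. Taking $\limsup$ as $\varepsilon \to 0$ absorbs the error into the slack between $\eta/2$ and $\eta$.

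The principal subtlety is controlling the boundary layer from imperfect tiling: the lattice generated by $C$ need not be compatible with the faces of $P$, so some cubes only partially overlap $P/\varepsilon$. Since $P$ is a bounded polyhedron, $\Hausdorff^{N-2}(\partial P) < +\infty$, so the number of such boundary tiles is $\BigO(\varepsilon^{-(N-2)})$, contributing only $\BigO(\varepsilon)$ to the energy after multiplication by $\varepsilon^{N-1}\mathcal{F}(u, T_C)$. A secondary issue is the finite prism height: on $\set{\inner{x}{\nu} = \pm\rho/2}$, $u_\varepsilon$ is only close to $\pm 1$ rather than equal, but this causes no problem for the energy bound (which already integrates over the full strip $T_C$), and the $L^p$-mismatch set near those faces has vanishing measure as $\varepsilon \to 0$.
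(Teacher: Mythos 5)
Your construction---rescaling a near-optimal $C$-periodic cell profile, using the dilation scaling $F_\varepsilon(u(\cdot/\varepsilon),A)\le\varepsilon^{N-1}\mathcal{F}(u,A/\varepsilon)$, and tiling $P/\varepsilon$ by lattice translates of $C$ with an $\BigO(\varepsilon^{-(N-2)})$ boundary-tile correction---is correct and is essentially the argument of step 2 of Theorem 5.2 in Alberti--Bellettini, which is exactly the proof the paper cites and omits. The accounting of the inner integral over $\RR^N$ in $\mathcal{F}$ versus over $P_\rho$ in $F_\varepsilon$, and the use of periodicity to get $\mathcal{F}(u,T_{c_i+C})=\mathcal{F}(u,T_C)$, are both handled correctly.
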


\begin{lemma}\label{lem:ls}
    Let $u \in BV_{loc}(\RR^N; \set{-1, 1})$ be such that there exists a polyhedral set $\Sigma$ of dimension $N-1$ such that $S_u = \Sigma$.
    For every $\sigma > 0$ there exist $\rho > 0$ and $\delta \in (0, \rho)$ with the following property: for every $\varepsilon_j \to 0$ there exists $v_j \in L^p(\Sigma^\rho; [-1, 1])$ such that $v_j = u$ on $\Sigma^\rho \setminus \Sigma^{\rho - \delta}$ and
    \[
        \limsup_{j \to \infty} F_{\varepsilon_j}(v_j, \Sigma^\rho)
            \leq \int_\Sigma \psi(\nu_u) \,d\Hausdorff^{N-1} + \sigma.
    \]
\end{lemma}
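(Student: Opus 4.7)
The plan is to build $v_j$ from three kinds of local approximations and glue them via Theorem \ref{thm:mod}: optimal transition profiles across each flat facet of $\Sigma$ (supplied by Lemma \ref{lem:poly}), the mollification $\tilde{u}_\varepsilon = u * \theta_\varepsilon$ in a thin neighborhood of the $(N-2)$-skeleton $\Sigma^{N-2}$ (whose energy is controlled by Proposition \ref{prop:isotropic}), and the target $u$ itself in a thin outer layer of $\Sigma^\rho$. Decompose $\Sigma = \bigcup_{k=1}^M P_k$ into maximal $(N-1)$-dimensional facets with constant normals $\nu_k$; by construction, $\sum_k \Hausdorff^{N-1}(P_k)\psi(\nu_k) = \int_\Sigma \psi(\nu_u)\,d\Hausdorff^{N-1}$. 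Introduce small parameters $\eta, \beta > 0$ to be chosen at the end, let $\tilde{P}_k \subset P_k$ denote the points of $P_k$ at distance at least $\beta$ from $\partial P_k \subset \Sigma^{N-2}$, and fix $\rho$, $\delta$ with $\delta \ll \rho \ll \beta$ so that the prisms $(\tilde{P}_k)_\rho := \set{x + t\nu_k \given x \in \tilde{P}_k,\, t \in (-\rho/2, \rho/2)}$ are pairwise disjoint and contained in $\Sigma^\rho$.

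\textbf{Constructing the pieces.} For each $k$, Lemma \ref{lem:poly} applied to $\tilde{P}_k$ with tolerance $\eta$ produces $u_\varepsilon^k \to w^{\nu_k}$ in $L^p((\tilde{P}_k)_\rho)$ with
\[
    \limsup_\varepsilon F_\varepsilon(u_\varepsilon^k, (\tilde{P}_k)_\rho)
        \leq \Hausdorff^{N-1}(P_k)\bigl(\psi(\nu_k) + \eta\bigr).
\]
On a neighborhood $(\Sigma^{N-2})^{C\beta}$ of the skeleton (with $C$ large enough to cover the junctions not reached by the prisms), Proposition \ref{prop:isotropic} gives
\[
    \JJ_\varepsilon(\tilde{u}_\varepsilon, (\Sigma^{N-2})^{C\beta})
        \lesssim \beta\, \Hausdorff^{N-2}(\Sigma^{N-2}),
\]
with $\WW_\varepsilon(\tilde{u}_\varepsilon, (\Sigma^{N-2})^{C\beta}) = o(1)$ as $\varepsilon \to 0$, since $\tilde{u}_\varepsilon = \pm 1$ outside an $\varepsilon$-neighborhood of $\Sigma$. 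On the outer layer $\Sigma^\rho \setminus \Sigma^{\rho - \delta}$ use $u$ itself; the potential term vanishes since $u = \pm 1$, while the kinetic term $\JJ_\varepsilon(u, \Sigma^\rho \setminus \Sigma^{\rho - \delta})$ vanishes as $\varepsilon \to 0$ by Lemma \ref{lemma:separation}, since $\Sigma$ is separated from the outer annulus by a distance of order $\rho - \delta$.

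\textbf{Gluing and conclusion.} Define $U_\varepsilon$ on $\Sigma^\rho$ piecewise: $u_\varepsilon^k$ on each prism $(\tilde{P}_k)_\rho$, $\tilde{u}_\varepsilon$ on the remainder of $(\Sigma^{N-2})^{C\beta} \cap \Sigma^\rho$, and $u$ elsewhere. On every transition region, both competing candidates converge in $L^1$ to the same $\pm 1$-valued function (either $u$ itself or the appropriate $w^{\nu_k}$, which agrees with $u$ on the facet side), so the matching hypothesis of Theorem \ref{thm:mod} is satisfied. Applying Theorem \ref{thm:mod} with $\Omega = \Sigma^\rho$, a suitable interior set $D$ containing all the constructed pieces, $u_k = U_\varepsilon$, and $w_k = u$ yields $v_j$ with $v_j = u$ on $\Sigma^\rho \setminus D$ and
\[
    \limsup_j F_{\varepsilon_j}(v_j, \Sigma^\rho)
        \leq \sum_k \Hausdorff^{N-1}(P_k)\bigl(\psi(\nu_k) + \eta\bigr) + C'\beta\, \Hausdorff^{N-2}(\Sigma^{N-2}) + o(1).
\]
Choosing $\eta, \beta$ sufficiently small gives the desired bound $\int_\Sigma \psi(\nu_u)\,d\Hausdorff^{N-1} + \sigma$.

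\textbf{Main obstacle.} The nonlocal kinetic term in $F_\varepsilon$ couples arbitrarily distant points, so a naive pointwise patching of $u_\varepsilon^k$, $\tilde{u}_\varepsilon$, and $u$ would introduce uncontrolled cross-piece interactions; Theorem \ref{thm:mod} is precisely the tool designed to neutralize these. The delicate step is arranging the geometry so that, on every overlap, both candidates converge in $L^1$ to the same limit (necessarily $u$), and simultaneously ordering the scales $\eta, \beta, \rho, \delta, \varepsilon$ so that the prism disjointness, applicability of Proposition \ref{prop:isotropic}, and smallness of the outer-layer contribution all hold in the limit.
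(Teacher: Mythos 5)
Your overall architecture matches the paper's: decompose $\Sigma$ into facets, run Lemma \ref{lem:poly} on prisms over slightly shrunken facets, use the mollified profile $\tilde{u}_\varepsilon$ near the $(N-2)$-skeleton with Proposition \ref{prop:isotropic}, and invoke Theorem \ref{thm:mod} to glue. However, the gluing step as you have written it has a genuine gap. You first define $U_\varepsilon$ by \emph{pointwise} patching $u^k_\varepsilon$, $\tilde{u}_\varepsilon$, and $u$, and then apply Theorem \ref{thm:mod} a single time with $u_k = U_\varepsilon$ on an interior set $D$ and $w_k = u$ outside. But the conclusion of Theorem \ref{thm:mod} bounds the energy of the glued function by $\limsup\bigl[F_{\varepsilon}(U_\varepsilon, D) + F_{\varepsilon}(u, \Sigma^\rho \setminus D_\delta)\bigr]$, and you never bound $F_\varepsilon(U_\varepsilon, D)$. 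That quantity contains exactly the cross-piece interactions $\JJ_\varepsilon(U_\varepsilon, (\tilde{P}_k)_\rho, (\Sigma^{N-2})^{C\beta})$ between \emph{adjacent} regions that you correctly identify as the main obstacle; these are not small for a naive patch (a jump of order one across a hypersurface of positive $\Hausdorff^{N-1}$-measure costs $O(1)$ energy per unit area under (\ref{H1})), and $L^1$-closeness of the two candidates does not by itself control them. The modification theorem is a two-function, single-interface tool, so one application at the outer boundary cannot neutralize the interior seams.

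The paper's proof resolves this by applying Theorem \ref{thm:mod} once \emph{per prism}, gluing $u^i_j$ to $\tilde{u}_j$ across the shell $R^i_\rho \setminus (R^i_\rho)_{2\delta}$ to produce $v^i_j$, and only then assembling the global $v_j$ pointwise (with $v_j = \tilde{u}_j$ on the complement of the prisms). The remaining cross terms in the resulting decomposition (\ref{eq:JJadd}) are then between sets at \emph{fixed positive distance} (pairwise disjoint closed prisms, and $P^i_\rho \subset\subset R^i_\rho$), so they vanish by Lemma \ref{lemma:separation}, while the skeleton neighborhood is handled by Proposition \ref{prop:isotropic}. Your argument needs this per-interface structure (or an equivalent explicit estimate of the seam interactions) to make the claimed final inequality follow; also note that the outer-layer matching $v_j = u$ on $\Sigma^\rho \setminus \Sigma^{\rho-\delta}$ comes for free from $\tilde{u}_j = u$ at distance greater than $\varepsilon_j$ from $\Sigma$, so no separate gluing to $u$ is needed there. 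A minor point: $\WW_\varepsilon(\tilde{u}_\varepsilon, (\Sigma^{N-2})^{C\beta})$ is $O(\beta)$, not $o(1)$ as $\varepsilon \to 0$, though this is harmless since you send $\beta \to 0$ at the end.
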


\begin{proof}
    The proof of Lemma \ref{lem:ls} is very similar to the proof of Lemma 7.4 in \cite{dal_maso_asymptotic_2017}, but with some simplifications made to adapt to our functional.

    Let $\delta_\Sigma > 0$ be as in Proposition \ref{prop:isotropic}, and fix $\sigma, \hat{\sigma} > 0$ with $\hat{\sigma} < \min\set{\delta_\Sigma, \sigma}$.
    There exists $\rho \in (0, \hat{\sigma})$ and a finite number of bounded polyhedra $P^1, \dotsc, P^k$ of dimension $N-1$ with normals $\nu^1, \dotsc, \nu^k$ contained in the facets of $\Sigma$ such that the closed prisms $\overline{P}^i_\rho$ are pairwise disjoint and
    \begin{equation}\label{prisms}
       \Sigma_\rho \setminus \bigcup_{i=1}^k P^i_\rho \subset (\Sigma^{N-2})^{\hat{\sigma}}.
    \end{equation}
    Find $R^1, \dotsc, R^k$ bounded polyhedra of dimension $N-1$ also contained in the facets of $\Sigma$ such that the closed prisms are disjoint and $P^i \subset\subset R^i$ for each $i$.

    Fix $\eta > 0$ such that $\eta \Hausdorff^{N-1}(\Sigma) < \sigma/2$, so that by Lemma \ref{lem:poly} for each polyhedron $R^i$ there exists a sequence $\set{u^i_j} \subset L^p(R^i_\rho; [-1, 1])$ such that $u^i_j \to u$ in $L^p(R^i_\rho)$, and
    \[
        \limsup_{j \to \infty} F_{\varepsilon_j}(u^i_j, R^i_\rho)
            \leq \Hausdorff^{N-1}(R^i)(\psi(\nu^i) + \eta).
    \]
    Choose $0 < \delta < \min\set{\rho/2, \hat{\sigma}}$ and, as in (\ref{moll}), define $\tilde{u}_j \defeq u * \theta_{\varepsilon_j}$.
    We apply the modification theorem, Theorem \ref{thm:mod}, with $\Omega = R^i_\rho$ and $D \defeq (R^i_\rho)_\delta$ to glue together $u^i_j$ and $\tilde{u}_j$ to create functions $v^i_j \in L^2(R^i_\rho)$ satisfying
    \begin{enumerate}
        \item $v^i_j \equiv u^i_j$ in $(R^i_\rho)_{2\delta}$,
        \item $v^i_j \equiv \tilde{u}_j$ on $R^i_\rho \setminus (R^i_\rho)_{2\delta}$,
        \item and
            \begin{equation}\label{eq:modshell}
                \limsup_{j \to \infty} F_{\varepsilon_j}(v^i_j, R^i_\rho)
                    \leq \limsup_{j \to \infty} \left(F_{\varepsilon_j}(u^i_j, (R^i_\rho)_\delta) + F_{\varepsilon_j}(\tilde{u}_j, R^i_\rho \setminus (R^i_\rho)_{2\delta})\right).
            \end{equation}
    \end{enumerate}

    We now have to bound $F_{\varepsilon_j}(\tilde{u}_j, R^i_\rho \setminus (R^i_\rho)_{2\delta})$.
    As in the proof of Proposition \ref{prop:isotropic}, to bound $\JJ_{\varepsilon_j}(\tilde{u}_j, R^i_\rho \setminus (R^i_\rho)_{2\delta})$, we break up the shell into a near field and far field;
    \begin{align*}
        C_j &\defeq \set{x \in R^i_\rho \setminus (R^i_\rho)_{2\delta} \given d(x, R^i) < \varepsilon_j}, \\
        \hat{C}_j &\defeq \set{x \in R^i_\rho \setminus (R^i_\rho)_{2\delta} \given d(x, R^i) \geq \varepsilon_j}.
    \end{align*}
    Therefore,
    \[
        \JJ_{\varepsilon_j}(\tilde{u}_j, R^i_\rho \setminus (R^i_\rho)_{2\delta})
            \leq 2\JJ_{\varepsilon_j}(\tilde{u}_j, C_j, R^i_\rho \setminus (R^i_\rho)_{2\delta})
               + 2\JJ_{\varepsilon_j}(\tilde{u}_j, \hat{C}_j),
    \]
    and, as in the proof of Theorem \ref{thm:mod}, again the first term is $\lesssim \delta C_{R^i_\rho}$ by Lemma \ref{lemma:interior}, where $C_{R^i_\rho}$ is a constant depending on $R^i_\rho$.
    We break up $\hat{C}_j$ into the following four sets
    \begin{align*}
        C_j^+ &\defeq \set{x \in \hat{C}_j \given d(x, R^i) \geq \rho - 2\delta, u(x) = 1}, \\
        S_j^+ &\defeq \set{x \in \hat{C}_j \given d(x, R^i) < \rho - 2\delta, u(x) = 1}, \\
        S_j^- &\defeq \set{x \in \hat{C}_j \given d(x, R^i) < \rho - 2\delta, u(x) = -1}, \\
        C_j^- &\defeq \set{x \in \hat{C}_j \given d(x, R^i) \geq \rho - 2\delta, u(x) = -1},
    \end{align*}
    so that by additivity of $\JJ$,
    \[
        \JJ_{\varepsilon_j}(\tilde{u}_j, \hat{C}_j)
            \leq \JJ_{\varepsilon_j}(\tilde{u}_j, C_j^+, S_j^- \cup C_j^-)
               + \JJ_{\varepsilon_j}(\tilde{u}_j, S_j^+, S_j^-)
               + \JJ_{\varepsilon_j}(\tilde{u}_j, C_j^+ \cup S_j^+, C_j^-).
    \]
    The first and third terms involve sets at distance $\geq \rho - 2\delta + \varepsilon$, so by Lemma \ref{lemma:separation} they will tend to zero as $j \to \infty$.
    By (\ref{prisms}), the sets $S_j^\pm$ are actually both subsets of $(\Sigma^{N-2})^{\hat{\sigma} + 2\delta} \subset (\Sigma^{N-2})^{3\hat{\sigma}}$, so by Proposition \ref{prop:isotropic}, the second term is $\lesssim \hat{\sigma}$; therefore, we get
    \begin{equation}\label{eq:JJshell}
        \limsup_{j \to \infty} \JJ_{\varepsilon_j}(\tilde{u}_j, R^i_\rho \setminus (R^i_\rho)_{2\delta}) \lesssim \hat{\sigma}.
    \end{equation}

    We can also bound the potential energy by noting that $W(u) = 0$ on $\hat{C}_j$, and so
    \begin{equation}\label{eq:WWshell}
        \limsup_{j \to \infty} \WW_{\varepsilon_j}(\tilde{u}_j, R^i_\rho \setminus (R^i_\rho)_{2\delta})
            = \limsup_{j \to \infty} \WW_{\varepsilon_j}(\tilde{u}_j, C_j)
            \leq M_W C_{R^i_\rho} \delta,
    \end{equation}
    where $M_W \defeq \max_{[-1,1]} W$.
    Combining inequalities (\ref{eq:JJshell}) and (\ref{eq:WWshell}), we have that
    \[
        \limsup_{j \to \infty} F_{\varepsilon_j}(\tilde{u}_j, \hat{C}_j)
            \leq \kappa_1 \hat{\sigma},
    \]
    for a constant $\kappa_1$ independent of $u$.
    This, together with (\ref{eq:modshell}) and by Lemma \ref{lem:poly}, yields
    \begin{equation}\begin{aligned}\label{eq:prismbound}
        \limsup_{j \to \infty} F_{\varepsilon_j}(v^i_j, R^i_\rho)
            &\leq \limsup_{j \to \infty} F_{\varepsilon_j}(u^i_j, R^i_\rho) + \kappa_1 \hat{\sigma} \\
            &\leq \Hausdorff^{N-1}(R^i)(\psi(\nu^i) + \eta) + \kappa_1 \hat{\sigma}.
    \end{aligned}\end{equation}

    Define $v_j \defeq v^i_j$ in $R^i_\rho$, and $v_j \defeq \tilde{u}_j$ on $A_\rho \defeq \Sigma^\rho \setminus \bigcup_{i=1}^k R^i_\rho$.
    Then $v_j \to u$ in $L^p(\Sigma^\rho)$ and $v_j = u$ on $\Sigma^\rho \setminus \Sigma^{\rho - \delta}$ for $j \gg 1$.

    By additivity of $\WW$, we obtain
    \begin{equation}\label{eq:WWadd}
        \WW_{\varepsilon_j}(v_j, \Sigma^\rho)
            \leq \sum_{i=1}^k \WW_{\varepsilon_j}(v_j, R^i_\rho) + \WW_{\varepsilon_j}(v_j, A_\rho),
    \end{equation}
    but $v_j = \tilde{u}_j$ on $A_\rho$, and $\tilde{u}_j = \pm 1$ for $x \not\in \Sigma^{2\varepsilon_j}$, so we can bound
    \begin{align*}
        \WW_{\varepsilon_j}(v_j, A_\rho)
            &\leq \WW_{\varepsilon_j}(\tilde{u}_j, (\Sigma^{N-2})^{\hat{\sigma}} \cap \Sigma^{2\varepsilon_j}) \\
            &\leq \frac{1}{\varepsilon_j} M_W \LL^N\left((\Sigma^{N-2})^{\hat{\sigma}} \cap \Sigma^{2\varepsilon_j}\right) \\
            &\leq M_W c_\Sigma \hat{\sigma} \Hausdorff^{N-2}(\Sigma^{N-2}),
    \end{align*}
    where $c_\Sigma > 0$ is a constant depending only on $\Sigma$.
    Combining the above inequality with (\ref{eq:WWadd}), we get
    \begin{equation}\label{eq:Wbound}
        \WW_{\varepsilon_j}(v_j, \Sigma^\rho)
            \leq \sum_{i=1}^k \WW_{\varepsilon_j}(v_j, R^i_\rho) + M_W c_\Sigma \hat{\sigma} \Hausdorff^{N-2}(\Sigma^{N-2}).
    \end{equation}

    To bound the kinetic energy, we use additivity to get
    \begin{equation}\begin{aligned}\label{eq:JJadd}
        \JJ_{\varepsilon_j}(v_j, \Sigma^\rho)
            &\leq \sum_{i=1}^k \JJ_{\varepsilon_j}(v_j, R^i_\rho) + 2\sum_{i=1}^k \JJ_{\varepsilon_j}(v_j, P^i_\rho, \Sigma^\rho \setminus R^i_\rho) \\
            &\quad+ \JJ_{\varepsilon_j}(v_j, (\Sigma^{N-2})^{\hat{\sigma}}) + \sum_{\substack{i = 1 \\ i \neq j}}^k \JJ_{\varepsilon_j}(v_j, R^i_\rho, R^j_\rho).
    \end{aligned}\end{equation}

    The sets in the second and fourth terms are at a fixed distance apart, so the corresponding terms tend to zero in the limit as $j \to \infty$.
    By Lemma \ref{prop:isotropic},
    \[
        \JJ_{\varepsilon_j}(v_j, (\Sigma^{N-2})^{\hat{\sigma}})
            \leq \kappa \hat{\sigma} \Hausdorff^{N-2}(\Sigma^{N-2}),
    \]
    where $\kappa$ does not depend on $v_j$.
    Combining this inequality with (\ref{eq:Wbound}), (\ref{eq:prismbound}), and (\ref{eq:JJadd}), we get
    \begin{align*}
        \limsup_{j \to \infty}F_{\varepsilon_j}(v_j, \Sigma^\rho)
                &\leq \int_\Sigma \psi(\nu_u) d\Hausdorff^{N-1} + \eta\Hausdorff^{N-1}(\Sigma) \\
                &\quad+ \kappa_1\hat{\sigma} + M_W c_\Sigma \hat{\sigma}\Hausdorff^{N-2}(\Sigma^{N-2}) + \kappa \hat{\sigma}\Hausdorff^{N-2}(\Sigma^{N-2}).
    \end{align*}
    Since $\eta\Hausdorff^{N-1}(\Sigma) < \sigma/2$, we can conclude by taking $\hat{\sigma}$ sufficiently small.
\end{proof}

From here, the proof of Theorem \ref{thm:limsup} follows from Lemma \ref{lem:ls}, approximating a general $u \in BV(\RR^N; \set{-1,1})$ by functions with polyhedral jump set (cf. \cite[Section 5.4]{alberti_non-local_1998}, \cite[Theorem 7.1]{dal_maso_asymptotic_2017}) and diagonalizing.
We sketch the proof below.

\begin{proof}[Proof of Theorem \ref{thm:limsup}]
    By \cite[Lemma 3.1]{baldo_minimal_1990}, for every $u \in BV(\Omega; \set{-1, 1})$ there exists a sequence $\set{u_k}$ in $BV(\Omega; \set{-1, 1})$ converging to $u$ in $L^1(\Omega)$ such that each jump set $S_{u_k}$ is the intersection of $\Omega$ and a polyhedral set of dimension $N-1$.
    Furthermore, since $\psi$ is upper semi-continuous, by Reshetnyak's convergence theorem (see \cite{spector_simple_2011}), we have that
    \[
        \limsup_{k \to \infty} \int_{S_{u_k}} \psi(\nu_{u_k}) \,d\Hausdorff^{N-1}
            \leq \int_{S_u} \psi(\nu_u) \,d\Hausdorff^{N-1}.
    \]
    Therefore, it is sufficient to prove the theorem for functions with polyhedral jump set.

    Let $u \in BV(\Omega; \set{-1, 1})$ be such that $S_u = \Omega \cap \Sigma$ with $\Sigma$ a bounded polyhedral set of dimension $N-1$.
    For all $\sigma > 0$, let $0 < \delta < \rho$ and $v_j$ be as in Lemma \ref{lem:ls}.
    Define $u_j \in L^2(\Omega)$ by $u_j \defeq v_j$ in $\Sigma^\rho$ and $u_j \defeq u$ in $\Omega \setminus \Sigma^\rho$.
    Since $v_j = u$ on $\Sigma^\rho \setminus \Sigma^{\rho - \delta}$ for $j \gg 1$, we have that $u_j = u$ on $\Omega \setminus \Sigma^{\rho - \delta}$.
    Therefore,
    \begin{equation}\label{eq:ww}
        \WW_{\varepsilon_j}(u_j, \Omega) \leq \WW_{\varepsilon_j}(v_j, \Sigma^\rho),
    \end{equation}
    for sufficiently large $j$.

    On the other hand, we can estimate $\JJ_{\varepsilon_j}(u_j, \Omega)$ by
    \begin{equation}\label{eq:jj}
        \JJ_{\varepsilon_j}(u_j, \Omega)
            \leq \JJ_{\varepsilon_j}(v_j, \Sigma^\rho) + 2\JJ_{\varepsilon_j}(u_j, \Sigma^{\rho - \delta}, \Omega \setminus \Sigma^\rho) + \JJ_{\varepsilon_j}(u, \Omega \setminus \Sigma^{\rho - \delta}).
    \end{equation}
    Since $d(\Sigma^{\rho-\delta}, \Omega \setminus \Sigma^\rho) > \delta$, we can apply Lemma \ref{lemma:separation} to the second term to get the inequality
    \[
        \JJ_{\varepsilon_j}(u_j, \Sigma^{\rho - \delta}, \Omega \setminus \Sigma^\rho)
            \leq \frac{1}{2\delta} \omega_1\left(\frac{\varepsilon_j}{\delta}\right) \int_\Omega \abs{u_j(x)}^2 \,dx.
    \]
    Therefore, as $j \to \infty$, this term vanishes from (\ref{eq:jj}).
    Similarly, the third term tends to zero by Lemma \ref{lemma:separation} since $\Omega \setminus \Sigma^{\rho - \delta}$ can be partitioned into two sets, on one of which $u \equiv +1$ and on the other $u \equiv -1$, separated by a gap of width $2(\rho - \delta)$.

    Therefore, by (\ref{eq:ww}), (\ref{eq:jj}), and Lemma \ref{lem:ls}, we have
    \[
        \limsup_{j \to \infty} F_{\varepsilon_j}(u_j, \Omega)
            \leq \limsup_{j \to \infty} F_{\varepsilon_j}(v_j, \Sigma_\rho)
            \leq \int_\Sigma \psi(\nu_u) d\Hausdorff^{N-1} + \sigma.
    \]
    Since we can construct such a sequence for each $\sigma > 0$, we can diagonalize this family to get a sequence $\set{u_j}$ such that
    \[
        \limsup_{j \to \infty} F_{\varepsilon_j}(u_j, \Omega)
            \leq \int_\Sigma \psi(\nu_u) d\Hausdorff^{N-1},
    \]
    concluding the proof.
\end{proof}

\section{Acknowledgments}

The author would like to thank Irene Fonseca and Giovanni Leoni for insightful and indispensable conversations in the preparation of this paper.
This work was completed in partial fulfillment of the requirements for the degree of Doctor of Philosophy in Mathematics under the direction of I. Fonseca and G. Leoni at Carnegie Mellon University.
This work was partially funded by the National Science Foundation under Grant No. DMS-2108784 ``Mathematics of Microstructure in Origami, Robotics, and Electrochemistry.''

\printbibliography

\end{document}